\def\beq{\begin{equation}}
\def\eeq{\end{equation}}
\def\ba{\begin{array}}
\def\ea{\end{array}}
\def\R{\mathbb R}
\newtheorem{thm}{Theorem}[section]
\newtheorem{lm}[thm]{Lemma}
\theoremstyle{definition}
\newtheorem{rem}[thm]{Remark}
\newtheorem{df}[thm]{Definition}
\theoremstyle{remark}
\begin{document}
\pagestyle{plain}
\title{Positive solutions to fractional $p$-Laplacian Choquard equation on lattice graphs}

\author{Lidan Wang}
\email{wanglidan@ujs.edu.cn}
\address{Lidan Wang: School of Mathematical Sciences, Jiangsu University, Zhenjiang 212013, People's Republic of China}

\begin{abstract}
In this paper, we study the fractional $p$-Laplacian Choquard equation 
$$
(-\Delta)_{p}^{s}  u+h(x)|u|^{p-2} u=\left(R_{\alpha} *F(u)\right)f(u)
$$
on lattice graphs $\mathbb{Z}^d$, where $s\in(0,1)$, $ p\geq 2$, $\alpha \in(0, d)$ and $R_\alpha$ represents the Green's function of the discrete fractional Laplacian that behaves as the Riesz potential. Under suitable assumptions on the potential function $h$, we first prove the existence of a strictly positive solution by the mountain-pass theorem for the  nonlinearity $f$ satisfying some growth conditions.  Moreover, if we add some monotonicity condition, we establish the existence of a positive ground state solution by the method of Nehari manifold.
\end{abstract}

\maketitle

{\bf Keywords:} fractional $p$-Laplacian Choquard equation, positive solution, ground state solution, mountain-pass theorem, Nehari manifold 

\
\

{\bf Mathematics Subject Classification 2020:} 35A15, 35R02, 35R11

\section{Introduction}
The nonlinear Choquard equation  
\begin{equation*}
(-\Delta)_{p}^{s}  u+h(x)|u|^{p-2} u=\left(I_{\alpha} *F(u)\right)f(u),\quad x\in\mathbb{R}^d,
\end{equation*}
where $I_\alpha$ is the Riesz potential, has drawn lots of interest in recent years. In particular, for $s=1$ and $p=2$,  this equation turns into the well-known Choquard equation, which has been studied extensively in the literature, see for examples \cite{ANY1,ANY2,GV,L2, MV2,MV3}. For $s\in (0,1)$ and $p=2$, this equation becomes the fractional Choquard equation, we refer the readers to \cite{GH,LS,YZ,YC,ZU,ZYW}. For $s\in(0,1)$ and $p>2$, this equation is the fractional $p$-Laplacian Choquard equation,  which has also been considered, see \cite{MZ,RL,S,S1,ZS}.

Nowadays, there are many works on graphs, see for examples \cite{GLY,HW,HLW,LY,SY,W2,W5,YZ1}. Recently, the discrete nonlinear Choquard equation has attracted much attention. The discrete analog of the above equation is
\begin{equation*}\label{aa}
(-\Delta)_{p}^{s}  u+h(x)|u|^{p-2} u=\left(R_{\alpha} *F(u)\right)f(u),\quad x\in\mathbb{Z}^d, 
\end{equation*}
where $R_\alpha$ behaves as the Riesz potential. For $s=1$ and $p=2$, Wang \cite{WZW} proved the existence of ground state solutions to the Choquard equation with power nonlinearity on lattice graphs. Later, the authors in \cite{LW1, LZ1,LZ2} derived the existence of ground state solutions to the biharmonic Choquard equation on lattice graphs. For $s=1$ and $p>2$, Liu \cite{L8} and Wang \cite{W8} established the existence of  positive solutions  and ground state solutions to this $p$-Laplacain Choquard equation  respectively on lattice graphs. Moreover,
Wang \cite{W4} obtained the existence of ground state solutions to a class of $p$-Laplacian equations with power Choquard nonlinearity on lattice graphs.
 For more related works on discrete Choquard equations, we refer the readers to \cite{L9,W1,W3,W7}.

Very recently, the discrete fractional Laplace operator has been studied by \cite{LR,ZY1}.  By using the definition of fractional Laplace operator in \cite{LR}, Wang \cite{W6} proved the existence and multiplicity of solutions to a discrete fractional Sch\"{o}dinger equation. 
 Zhang, Lin and Yang \cite{ZY1} obtained the multiplicity solutions to a discrete fractional Schr\"{o}dinger equation based on the definition of fractional  Laplace operator introduced by themselves.  Later, Zhang, Lin and Yang \cite{ZY} proposed a fractional $p$-Laplace operator and established the existence of positive solutions and positive ground state solutions to a discrete fractional $p$-Laplacian Schr\"{o}dinger equation. To the best of our knowledge, there are no works about the discrete fractional $p$-laplacian ($p\geq 2$) Choquard equations. Inspired  by the papers above, in this paper, we study the fractional $p$-Laplacian Choquard equation
\begin{equation}\label{aa}
(-\Delta)_{p}^{s}  u+h(x)|u|^{p-2} u=\left(R_{\alpha} *F(u)\right)f(u),\quad x\in\mathbb{Z}^d, 
\end{equation}
 where $s\in(0,1)$, $ p\geq 2$, $\alpha \in(0, d)$ and $R_\alpha$ stands for the Green's function of the discrete fractional Laplacian, $$ R_{\alpha}(x,y)=\frac{K_{\alpha}}{(2\pi)^d}\int_{\mathbb{T}^d}e^{i(x-y)\cdot k}\mu^{-\frac{\alpha}{2}}(k)\,dk,\quad x,y\in \mathbb{Z}^d,$$
which contains the fractional degree
$$K_\alpha=\frac{1}{(2\pi)^d}\int_{\mathbb{T}^d}\mu^{\frac{\alpha}{2}}(k)\,dk,\,\mu(k)=2d-2\sum\limits_{j=1}^d \cos(k_j),$$
where $\mathbb{T}^d=[0,2\pi]^d,\,k=(k_1,k_2,\dots,k_d)\in\mathbb{T}^d,$ and $\frac{2\pi}{d_j}\ell_j\rightarrow k_j$,\,$\ell_j=0,1,\dots,d_j-1,$ as $d_j\rightarrow\infty$.  It follows from \cite{MC} that the Green's function $R_{\alpha}$ behaves as $|x-y|^{-(d-\alpha)}$ for $|x-y|\gg 1$.  Here the fractional $p$-Laplace operator $(-\Delta)_{p}^{s}$ is given by 
$$
(-\Delta)_{p}^{s} u(x)=\frac{1}{2} \sum_{y \in \mathbb{Z}^d, y \neq x} W_{s}(x, y)\left(\left|\nabla^{s} u\right|^{p-2}(x)+\left|\nabla^{s} u\right|^{p-2}(y)\right)(u(x)-u(y)),
$$ where $K_s(x,y)$ is a symmetric positive function satisfying
$$
c_{s,d}|x-y|^{-d-2s}\leq K_s(x,y)\leq C_{s,d}|x-y|^{-d-2s},\quad x\neq y.
$$
This order estimate can be seen in \cite[Theorem 2.4]{W0}.  Now we state the assumptions on the potential $h$ and the nonlinearity $f$:
\begin{itemize}
\item[($h_1$)] for any $x\in\mathbb{Z}^d$, there exists a constant $h_0>0$ such that $h(x) \geq h_0$;
\item[($h_2$)] there exists a point $x_0\in\mathbb{Z}^d$ such that $h(x)\rightarrow\infty$ as $|x-x_0|\rightarrow\infty;$
\item[($f_1$)] $f(0)=0,\,f\in C(\mathbb{R},\mathbb{R})$ and $f(t)=o\left(t^{p-1}\right)$ as $t\rightarrow 0^+$;
\item[($f_2$)] there exist constants $C>0$ and $\tau>\frac{(d+\alpha)p}{2d}$ such that
$$
f(t)\leq C(1+t^{\tau-1}), \quad t>0;
$$ 
\item[($f_3$)] there exists a constant $\theta>p$ such that $$0\leq \theta F(t)=\theta \int_{0}^{t}f(s)\,ds \leq 2f(t)t,\quad t>0;$$
\item[($f_4$)] for any $u\in H_{s,p}\backslash\{0\}$, $$\frac{\int_{\mathbb{Z}^d}(R_\alpha \ast F(tu))f(tu)u\,d\mu}{t^{p-1}}$$ is strictly increasing with respect $t\in (0, \infty)$.
\end{itemize}

Obviously, by $(f_1)$ and $(f_2)$, one has that for any $\varepsilon>0$, there exists $C_\varepsilon>0$ such that
\begin{equation}\label{ad}
f(t)\leq\varepsilon t^{p-1}+C_\varepsilon t^{\tau-1},\quad t>0,
\end{equation}
and thus
\begin{equation}\label{ae}
F(t)\leq\varepsilon t^{p}+C_\varepsilon t^{\tau},\quad t>0.
\end{equation}

Our main results are as follows.
\begin{thm}\label{t2}
 Assume that $(h_1)$, $(h_2)$ and $(f_1)$-$(f_3)$ hold. Then the equation (\ref{aa}) has a strictly positive solution.
\end{thm}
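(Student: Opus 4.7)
The plan is to apply the mountain pass theorem to the energy functional
\[
J(u) = \frac{1}{p}\int_{\mathbb{Z}^d}\bigl(|\nabla^s u|^p + h(x)|u|^p\bigr)\,d\mu - \frac{1}{2}\int_{\mathbb{Z}^d}(R_\alpha\ast F(u))F(u)\,d\mu
\]
on the Sobolev-type space $H_{s,p}$. To force a nonnegative critical point from the outset, I would first extend $f$ by zero on $(-\infty,0]$; testing a weak solution against $u^-:=\min(u,0)$ then gives $u^-\equiv 0$.

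For the mountain-pass geometry, the bound $F(t)\leq \varepsilon t^p + C_\varepsilon t^\tau$ from (\ref{ae}) combined with a discrete Hardy--Littlewood--Sobolev-type inequality for the Riesz-like kernel $R_\alpha$ controls $\int_{\mathbb{Z}^d}(R_\alpha\ast F(u))F(u)\,d\mu$ by $\varepsilon^2\|u\|^{2p}+C_\varepsilon^2\|u\|^{2\tau}$, where $\|u\|$ is the natural norm on $H_{s,p}$; the subcriticality assumption $\tau>(d+\alpha)p/(2d)$ is precisely what is needed to beat $\frac{1}{p}\|u\|^p$ on a small sphere, giving $J(u)\geq\delta>0$. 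Integrating $(f_3)$ yields $F(t)\geq Ct^{\theta/2}$ for large $t$, and since $\theta>p$ one has $J(tu_0)\to-\infty$ as $t\to\infty$ for any fixed $0\not\equiv u_0\geq 0$.

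For compactness, $(f_3)$ supplies the standard Ambrosetti--Rabinowitz-type estimate
\[
J(u_n) - \tfrac{1}{\theta}\langle J'(u_n),u_n\rangle \geq \Bigl(\tfrac{1}{p}-\tfrac{1}{\theta}\Bigr)\|u_n\|^p,
\]
so that any Palais--Smale sequence is bounded. The coercivity of $h$ from $(h_2)$ makes the embedding $H_{s,p}\hookrightarrow \ell^q(\mathbb{Z}^d)$ compact for every $q\in[p,\infty)$, giving pointwise and $\ell^q$-strong convergence along a subsequence $u_n\to u$. Using the decay $R_\alpha(x,y)\leq C|x-y|^{-(d-\alpha)}$ together with (\ref{ad})--(\ref{ae}), the Choquard nonlinearity passes to the limit, from which strong convergence in $H_{s,p}$ and $J(u)=c>0$ follow; hence $u\not\equiv 0$ is a critical point.

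Finally, strict positivity of $u\geq 0$ comes from the pointwise equation. If $u(x_0)=0$, both $h(x_0)|u(x_0)|^{p-2}u(x_0)$ and $f(u(x_0))$ vanish, so
\[
\tfrac{1}{2}\sum_{y\neq x_0}W_s(x_0,y)\bigl(|\nabla^s u|^{p-2}(x_0)+|\nabla^s u|^{p-2}(y)\bigr)\bigl(-u(y)\bigr)=0,
\]
a sum of nonpositive terms which must vanish individually. Since $W_s>0$ globally and $|\nabla^s u|(x_0)=0$ would force $u\equiv u(x_0)=0$, this contradicts $u\not\equiv 0$. The principal difficulty of the argument is the passage to the limit in the nonlocal Choquard term along Palais--Smale sequences --- the usual sticking point in this family of problems --- but the strong coercivity of $h$ from $(h_2)$ together with the explicit decay of $R_\alpha$ makes it manageable on the lattice.
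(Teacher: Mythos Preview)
Your proposal is correct and follows essentially the same route as the paper: extend $f$ by zero on $(-\infty,0]$, verify the mountain-pass geometry via the discrete HLS inequality and the growth bound on $F$, obtain boundedness of Palais--Smale sequences from the Ambrosetti--Rabinowitz estimate built into $(f_3)$, use the compact embedding $H_{s,p}\hookrightarrow \ell^q$ coming from the coercivity $(h_2)$ to pass to the limit in the Choquard term, and conclude strict positivity from the pointwise equation. The only technical point you leave implicit is that the final upgrade to strong convergence $u_n\to u$ in $H_{s,p}$ relies on a Simon-type inequality for the fractional $p$-Laplacian structure (the paper quotes it from~\cite{ZY}), but this is standard once the nonlocal term has been handled.
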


\begin{thm}\label{t1}
Assume that $(h_1)$, $(h_2)$ and $(f_1)$-$(f_4)$ hold. Then the equation (\ref{aa}) has a strictly positive ground state solution.    
\end{thm}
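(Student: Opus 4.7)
The plan is to apply the Nehari manifold method: $(f_4)$ provides the strict monotonicity that projects every nonzero element uniquely onto the Nehari set, where the energy is then minimized. Define
\[
J(u)=\frac{1}{p}\|u\|^{p}-\frac{1}{2}\int_{\mathbb{Z}^d}\bigl(R_{\alpha}\ast F(u)\bigr)F(u)\,d\mu,
\qquad
\mathcal{N}=\bigl\{u\in H_{s,p}\setminus\{0\}:\langle J'(u),u\rangle=0\bigr\},
\]
where $\|\cdot\|$ is the $H_{s,p}$-norm pairing the double-sum seminorm of $(-\Delta)_p^s$ with $\int h(x)|u|^p\,d\mu$, and set $c:=\inf_{\mathcal{N}}J$. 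For fixed $u\ne 0$, the function $\psi_u(t):=J(tu)$ is positive for small $t>0$ by \eqref{ad}-\eqref{ae} and tends to $-\infty$ as $t\to\infty$ by $(f_3)$, so it attains a maximum. Writing
\[
\langle J'(tu),tu\rangle=t^{p}\Bigl[\|u\|^{p}-\frac{\int_{\mathbb{Z}^d}(R_{\alpha}\ast F(tu))f(tu)u\,d\mu}{t^{p-1}}\Bigr],
\]
the bracket is strictly decreasing in $t$ by $(f_4)$, so the maximizer $t_u>0$ is unique with $t_u u\in\mathcal{N}$. This fibering identifies $c=\inf_{u\ne 0}\max_{t>0}J(tu)$ with the mountain-pass value supplied by Theorem \ref{t2}; in particular $c>0$.

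For a minimizing sequence $\{u_n\}\subset\mathcal{N}$ with $J(u_n)\to c$, the Ambrosetti-Rabinowitz condition $(f_3)$ gives
\[
c+o(1)=J(u_n)-\frac{1}{\theta}\langle J'(u_n),u_n\rangle\ge\Bigl(\frac{1}{p}-\frac{1}{\theta}\Bigr)\|u_n\|^{p},
\]
so $\{u_n\}$ is bounded. The Nehari identity $\|u_n\|^{p}=\int(R_{\alpha}\ast F(u_n))f(u_n)u_n\,d\mu$, coupled with the discrete Hardy-Littlewood-Sobolev inequality and the growth bound \eqref{ad}, furnishes a uniform lower bound $\|u_n\|\ge\delta>0$. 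The coercivity in $(h_2)$ yields, exactly as in the proof of Theorem \ref{t2}, the compact embedding $H_{s,p}\hookrightarrow\ell^q(\mathbb{Z}^d)$ for every $q\ge p$, so along a subsequence $u_n\rightharpoonup u_0$ in $H_{s,p}$, strongly in each such $\ell^q$ and pointwise; strong $\ell^q$-convergence transfers $\delta$ to $u_0$, ensuring $u_0\not\equiv 0$.

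The main analytic obstacle is passing to the limit in the doubly nonlocal Choquard integral. Pointwise convergence, the $\ell^{2dp/(d+\alpha)}$-control of $F(u_n)$ and $f(u_n)u_n$ ensured by the alignment of $\tau>(d+\alpha)p/(2d)$ in $(f_2)$ with the Hardy-Littlewood-Sobolev exponent, and the decay $R_\alpha(x,y)\lesssim|x-y|^{-(d-\alpha)}$ together support a Vitali/Brezis-Lieb argument yielding
\[
\int_{\mathbb{Z}^d}(R_{\alpha}\ast F(u_n))F(u_n)\,d\mu\longrightarrow\int_{\mathbb{Z}^d}(R_{\alpha}\ast F(u_0))F(u_0)\,d\mu.
\]
Combining with weak lower semicontinuity of the norm and the Nehari maximality $J(tu_n)\le J(u_n)$ gives $J(tu_0)\le c$ for every $t>0$; evaluating at the unique Nehari projection $t_{u_0}$ and recalling $t_{u_0}u_0\in\mathcal{N}$ produces $J(t_{u_0}u_0)=c$, so $v_0:=t_{u_0}u_0$ is a ground state. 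Running the same argument with $|u_n|$ in place of $u_n$ (and $f$ extended by zero on $(-\infty,0]$) gives a nonnegative ground state, again denoted $v_0$. Strict positivity finally follows from \eqref{aa}: if $v_0(x_0)=0$ at some $x_0$, the equation at $x_0$ combined with $f(0)=0$, $h(x_0)>0$, $v_0\ge 0$, $K_s>0$, and the positivity of $|\nabla^s v_0|^{p-2}(x_0)$ (forced by $v_0\not\equiv 0$) compels $v_0(y)=0$ for all $y\in\mathbb{Z}^d$, a contradiction. Hence $v_0>0$, the desired strictly positive ground state of \eqref{aa}.
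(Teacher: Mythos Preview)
Your Nehari-manifold strategy matches the paper's, and the fibering/projection argument, the boundedness of the minimizing sequence, the compactness from $(h_2)$, and the passage to the limit in the Choquard terms are all in line with the paper's Lemmas. Your attainment argument (``$J(tu_0)\le c$ for every $t>0$, hence $J(t_{u_0}u_0)=c$'') is in fact slightly cleaner than the paper's contradiction proof in Lemma~\ref{i3}.

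There is, however, a genuine gap: you produce $v_0=t_{u_0}u_0\in\mathcal N$ with $J(v_0)=c$ and immediately call it a ``ground state,'' but you have not shown that $v_0$ is a \emph{critical point} of $J$ on $H_{s,p}$. A minimizer on $\mathcal N$ need not satisfy the free Euler--Lagrange equation without an additional argument (Lagrange multipliers require $\mathcal N$ to be a $C^1$ manifold and the constraint gradient to be nonvanishing; alternatively one can use a deformation). The paper supplies exactly this missing step in Lemma~\ref{lf}: assuming $\widetilde J_{s,p}'(u_0)\neq 0$, one picks $v$ with $\langle \widetilde J_{s,p}'(u_0),v\rangle<0$, perturbs along $\phi(t,r)=tu_0+rv$, and uses the strict monotonicity in $(f_4)$ to locate, for small $r>0$, a point $\phi(t_r,r)\in\mathcal M_{s,p}$ with $\widetilde J_{s,p}(\phi(t_r,r))<c_{s,p}$, a contradiction. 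You either need this argument, or you should actually \emph{use} the identification you mention: once $c=\inf_{u\ne 0}\max_{t>0}J(tu)$ equals the mountain-pass level, the critical point furnished by Theorem~\ref{t2} (which is nontrivial, hence lies on $\mathcal N$) is automatically a ground state at level $c$; you state this identification but then abandon it.

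A secondary issue is your positivity step. ``Running the same argument with $|u_n|$ in place of $u_n$'' does not keep you on $\mathcal N$: the fractional seminorm decreases while the Choquard term (with $f$ extended by zero) increases, so $|u_n|$ is generally off the Nehari manifold and must be reprojected. The paper avoids this by working from the outset with the modified functional $\widetilde J_{s,p}(u)$ built from $F(u^+)$; any nontrivial critical point of $\widetilde J_{s,p}$ is then shown to be strictly positive via Lemma~\ref{i0}, which is the same strong-minimum argument you sketch at the end. Adopting that framing would clean up this part of your proof.
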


This paper is organized as follows. In Section 2, we state some basic results on graphs. In Section 3, we prove Theorem \ref{t2} by the mountain-pass theorem. In Section 4, we verify Theorem \ref{t1} by the method of Nehari manifold.

\section{Preliminaries} 
In this section, we state some basic results on graphs. Let $G=(V, E)$ be a connected, locally finite graph, where $V$ denotes the vertex set and $E$ denotes the edge set. We call vertices $x$ and $y$ neighbors, denoted by $x \sim y$, if there exists an edge connecting them, i.e. $(x, y) \in E$. For any $x,y\in\mathbb{Z}^d$, the distance $d(x,y)$ is defined as the minimum number of edges connecting $x$ and $y$, namely
$$d(x,y)=\inf\{k:x=x_0\sim\cdots\sim x_k=y\}.$$

In this paper, we consider, the natural discrete model of the Euclidean space, the integer lattice graph.  The $d$-dimensional integer lattice graph, denoted by $\mathbb{Z}^d$, consists of the set of vertices $V=\mathbb{Z}^d$ and the set of edges $E=\{(x,y): x,\,y\in\mathbb{Z}^d,\,\underset {{i=1}}{\overset{d}{\sum}}|x_{i}-y_{i}|=1\}.$
In the sequel, we denote $|x-y|:=d(x,y)$ on the lattice graph $\mathbb{Z}^d$.

Let $C(\mathbb{Z}^d)$ be the set of all functions on $\mathbb{Z}^d$. For $u,v \in C(\mathbb{Z}^d)$ and $s \in(0,1)$, as in \cite{ZY}, we define the fractional gradient form as
$$
\nabla^{s} u \nabla^{s} v(x)=\frac{1}{2} \sum_{y \in \mathbb{Z}^d, y \neq x} W_{s}(x, y)(u(x)-u(y))(v(x)-v(y)),
$$
 where $K_s(x,y)$ is a symmetric positive function satisfying
$$
c_{s,d}|x-y|^{-d-2s}\leq K_s(x,y)\leq C_{s,d}|x-y|^{-d-2s},\quad x\neq y.
$$
Moreover, we write the length of $\nabla^{s} u(x)$ as
$$
\left|\nabla^{s} u\right|(x)=\sqrt{\nabla^{s} u \nabla^{s} u(x)}=\left(\frac{1}{2} \sum_{y \in \mathbb{Z}^d, y \neq x} W_{s}(x, y)(u(x)-u(y))^{2}\right)^{\frac{1}{2}} .
$$
For $p \in[2,\infty)$, we define the fractional $p$-Laplacian of $u\in  C(\mathbb{Z}^d)$ as
$$
(-\Delta)_{p}^{s} u(x)=\frac{1}{2} \sum_{y \in \mathbb{Z}^d, y \neq x} W_{s}(x, y)\left(\left|\nabla^{s} u\right|^{p-2}(x)+\left|\nabla^{s} u\right|^{p-2}(y)\right)(u(x)-u(y)).
$$

The space $\ell^{p}(\mathbb{Z}^d)$ is defined as $
\ell^{p}(\mathbb{Z}^d)=\left\{u \in C(\mathbb{Z}^d):\|u\|_{p}<\infty\right\},
$ where
$$
\|u\|_{p}= \begin{cases}\left(\sum\limits_{x \in \mathbb{Z}^d}|u(x)|^{p}\right)^{\frac{1}{p}}, &  1 \leq p<\infty, \\ \sup\limits_{x \in \mathbb{Z}^d}|u(x)|, & p=\infty.\end{cases}
$$
Let $u\in C(\mathbb{Z}^d)$. For convenience, we always write $$
\int_{\mathbb{Z}^d} u(x)\,d \mu=\sum\limits_{x \in \mathbb{Z}^d} u(x),
$$
where $\mu$ is the counting measure in $\mathbb{Z}^d$. 

For any $s \in(0,1)$ and $p \in[2,\infty)$, we define a fractional Sobolev space
$$
W^{s, p}(\mathbb{Z}^d)=\left\{u \in L^{\infty}(\mathbb{Z}^d): \int_{\mathbb{Z}^d}\left(\left|\nabla^{s} u\right|^{p}+|u|^{p}\right) d \mu<\infty\right\}
$$
with a Sobolev norm
$$
\|u\|_{W^{s, p}}=\left(\int_{\mathbb{Z}^d}\left(\left|\nabla^{s} u\right|^{p}+|u|^{p}\right) d \mu\right)^{\frac{1}{p}}.
$$
By \cite[Theorem 1]{ZY}, one gets that $W^{s, p}(\mathbb{Z}^d)$ is a reflexive Banach space.

Let $C_{c}(\mathbb{Z}^d)$ be the set of all functions on $\mathbb{Z}^d$ with finite support. Let $H_{s, p}(\mathbb{Z}^d)$ ($H_{s,p}$ for brevity) be the completion of $C_{c}(\mathbb{Z}^d)$ with respect to the norm
\begin{equation*}
\|u\|_{H_{s, p}}=\left(\int_{\mathbb{Z}^d}\left(\left|\nabla^{s} u\right|^{p}+h(x)|u|^{p}\right) d \mu\right)^{\frac{1}{p}}. 
\end{equation*}
It is clear that $H_{s, p}$ is a Banach space.  Moreover, by $(h_1)$, we have \begin{equation}\label{lx}
\|u\|_p^p\leq \frac{1}{h_0}\int_{\mathbb{Z}^d} h(x)u^p(x)\,d\mu\leq \frac{1}{h_0}\|u\|_{H_{s, p}}^p,    
\end{equation}
which implies that
$$
\|u\|_{W^{s, p}}^{p} \leq \left(1+\frac{1}{h_{0}}\right)\|u\|_{H_{s, p}}^{p}.
$$
This yields that
$H_{s, p}$ is a closed subspace of $W^{s, p}(\mathbb{Z}^d)$, which means that $H_{s, p}$ is also a reflexive Banach space.

The energy functional $J_{s,p}: H_{s, p}\rightarrow\R$ related to the equation (\ref{aa}) is given by
$$
J_{s,p}(u)=\frac{1}{p}\int_{\mathbb{Z}^d}\left(|\nabla^s u|^{p}+h(x)|u|^{p}\right) \,d\mu-\frac{1}{2}\int_{\mathbb{Z}^d}(R_\alpha \ast F(u))F(u)\,d\mu.
$$
Since $p\geq 2$, by the discrete HLS inequality (Lemma \ref{lm1} below) and the formulas of integration by parts (Lemma \ref{l0} below), one gets easily that $J_{s,p}\in C^{1}(H_{s,p},\mathbb{R})$ and, for $\phi\in H_{s,p}$,
\begin{eqnarray*}
\left\langle J_{s,p}^{\prime}(u), \phi\right\rangle=\int_{\mathbb{Z}^d}\left(|\nabla^s u|^{p-2}\nabla^s u \nabla^s \phi+h(x)|u|^{p-2} u \phi\right) \,d\mu-\int_{\mathbb{Z}^d}(R_\alpha \ast F(u))f(u)\phi\,d\mu.
\end{eqnarray*}

\begin{df}
We say $u \in H_{s,p}$ is a nontrivial weak solution to the equation (\ref{aa}) if $u\neq 0$ is a critical point of the functional $J_{s,p}$, i.e. $J_{s,p}'(u)=0$.
We say that $u\in H_{s,p}$ is a ground state solution to the equation (\ref{aa}) if
$u$ is a nontrivial critical point of the functional $J_{s,p}$ such that
$$
J_{s,p}(u)=\inf\limits_{v\in M_{s,p}} J_{s,p}(v),
$$
where $M_{s,p}=\{v\in H_{s,p}\backslash\{0\}: \langle J_{s,p}'(v),v\rangle=0\}$ is the corresponding Nehari manifold. 	
\end{df}

Next, we give some basic lemmas that useful in this paper. The first one is about the formulas of integration by parts, see \cite{ZY}.

\begin{lm}\label{l0} 
Let $u \in W^{s, p}(\mathbb{Z}^d)$. Then for any $\phi \in C_{c}(\mathbb{Z}^d)$, we have
$$
\int_{\mathbb{Z}^d} \phi(-\Delta)_{p}^{s} u\,d \mu=\int_{\mathbb{Z}^d}\left|\nabla^{s} u\right|^{p-2} \nabla^{s} u \nabla^{s} \phi\,d \mu.
$$
\end{lm}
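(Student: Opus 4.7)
My plan is to prove the identity by a direct symmetrization argument on the resulting double sum over $\mathbb{Z}^d\times\mathbb{Z}^d$; the only real issue is justifying absolute convergence so that Fubini and index swaps are legal. Substituting the definition of $(-\Delta)_p^s u$ turns the LHS into
\begin{equation*}
\int_{\mathbb{Z}^d}\phi(-\Delta)_p^s u\,d\mu=\frac{1}{2}\sum_x\sum_{y\ne x}\phi(x)W_s(x,y)\bigl(|\nabla^s u|^{p-2}(x)+|\nabla^s u|^{p-2}(y)\bigr)(u(x)-u(y)).
\end{equation*}
Relabelling the dummy indices $x\leftrightarrow y$ and using $W_s(x,y)=W_s(y,x)$ produces the same quantity with $\phi(x)(u(x)-u(y))$ replaced by $\phi(y)(u(y)-u(x))$; averaging the two expressions yields
\begin{equation*}
\int_{\mathbb{Z}^d}\phi(-\Delta)_p^s u\,d\mu=\frac{1}{4}\sum_x\sum_{y\ne x}W_s(x,y)\bigl(|\nabla^s u|^{p-2}(x)+|\nabla^s u|^{p-2}(y)\bigr)(u(x)-u(y))(\phi(x)-\phi(y)).
\end{equation*}

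Next I would expand the RHS using the definition of the fractional gradient form, obtaining
\begin{equation*}
\int_{\mathbb{Z}^d}|\nabla^s u|^{p-2}\nabla^s u\nabla^s\phi\,d\mu=\frac{1}{2}\sum_x\sum_{y\ne x}|\nabla^s u|^{p-2}(x)W_s(x,y)(u(x)-u(y))(\phi(x)-\phi(y)).
\end{equation*}
Because the factor $(u(x)-u(y))(\phi(x)-\phi(y))$ is itself invariant under $x\leftrightarrow y$, a single index swap here redistributes the lone factor $|\nabla^s u|^{p-2}(x)$ into the symmetric combination $\tfrac12\bigl(|\nabla^s u|^{p-2}(x)+|\nabla^s u|^{p-2}(y)\bigr)$, producing exactly the same symmetric double sum as on the LHS and closing the identity.

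The main obstacle is the absolute summability needed to make those swaps legitimate. Here the two hypotheses combine nicely: $\phi\in C_c(\mathbb{Z}^d)$ reduces the outer sum to finitely many $x$, so it suffices to bound, for each such $x$,
\begin{equation*}
\sum_{y\ne x}W_s(x,y)\bigl(|\nabla^s u|^{p-2}(x)+|\nabla^s u|^{p-2}(y)\bigr)|u(x)-u(y)|.
\end{equation*}
The pointwise factor $|\nabla^s u|^{p-2}(y)$ is uniformly bounded in $y$ because $u\in W^{s,p}(\mathbb{Z}^d)\subset\ell^\infty$ forces $|\nabla^s u|\in\ell^p\subset\ell^\infty$. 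Cauchy--Schwarz then reduces the remaining sum to the product of $\sum_y W_s(x,y)$, which is finite because $W_s(x,y)\le C_{s,d}|x-y|^{-d-2s}$ is summable in $y$ thanks to $d+2s>d$, and $\sum_y W_s(x,y)(u(x)-u(y))^2=2|\nabla^s u|^2(x)<\infty$. Once absolute convergence is in place the remaining manipulations become a routine application of Fubini, and I would expect no further subtleties.
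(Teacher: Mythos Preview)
Your argument is correct: the symmetrization of the double sum is exactly how one derives this integration-by-parts formula, and your absolute-convergence justification is sound (the key points being that $\phi$ has finite support, that $|\nabla^s u|\in\ell^p\subset\ell^\infty$ so the weight $|\nabla^s u|^{p-2}$ is uniformly bounded, and that $\sum_{y}W_s(x,y)<\infty$ allows Cauchy--Schwarz to handle the remaining single-index sum). One small point you leave implicit: for the swap on the RHS you also need absolute summability of a sum where \emph{both} $x$ and $y$ range over $\mathbb{Z}^d$, but since $\phi(x)-\phi(y)$ vanishes unless at least one of $x,y$ lies in $\operatorname{supp}\phi$, this again collapses to finitely many inner sums of the type you already bounded.

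As for comparison with the paper: the paper does not prove this lemma at all but simply attributes it to \cite{ZY}, so there is no in-paper argument to compare against. Your direct proof is the standard one and would be appropriate here.
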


\begin{rem}
 If $u$ is a weak solution to the equation (\ref{aa}), then by Lemma \ref{l0}, one can get easily that $u$ is a pointwise solution to the equation (\ref{aa}).   
\end{rem}

\begin{lm}\label{i1}
Let $u\in H_{s,p}$. Then we have $u^-,u^+, |u|\in H_{s,p}$, where $u^{+}=\max \{u, 0\}$ and $u^{-}=\min \{u, 0\}.$    
\end{lm}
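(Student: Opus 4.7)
The plan is to exploit the fact that the maps $t\mapsto t^{+},\,t^{-},\,|t|$ are all $1$-Lipschitz on $\mathbb{R}$, so that composition with these maps decreases (or preserves) both the fractional gradient and the weighted $\ell^p$ norm. This will give finiteness of $\|u^{\pm}\|_{H_{s,p}}$ and $\||u|\|_{H_{s,p}}$ at once. The more subtle point is that $H_{s,p}$ is defined as a completion of $C_c(\mathbb{Z}^d)$, so membership in $H_{s,p}$ does not automatically follow from finite norm; I plan to bridge this gap using reflexivity.

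First I would record the elementary inequalities
\begin{equation*}
|a^{+}-b^{+}|\le |a-b|,\qquad |a^{-}-b^{-}|\le |a-b|,\qquad \bigl||a|-|b|\bigr|\le |a-b|,
\end{equation*}
which follow by four-way sign analysis. Squaring and substituting $a=u(x)$, $b=u(y)$ into the definition of $|\nabla^{s}\cdot|$ yields the pointwise bounds $|\nabla^{s}u^{+}|(x)\le |\nabla^{s}u|(x)$, $|\nabla^{s}u^{-}|(x)\le |\nabla^{s}u|(x)$, and $|\nabla^{s}|u||(x)\le |\nabla^{s}u|(x)$. Combined with $|u^{\pm}|^{p},\,||u||^{p}\le |u|^{p}$, this gives $\|u^{\pm}\|_{H_{s,p}}\le \|u\|_{H_{s,p}}$ and $\||u|\|_{H_{s,p}}\le \|u\|_{H_{s,p}}<\infty$.

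To upgrade from finite norm to actual membership in the completion, I would pick an approximating sequence $(u_{n})\subset C_{c}(\mathbb{Z}^{d})$ with $u_{n}\to u$ in $H_{s,p}$, and observe that each $u_{n}^{\pm}$ and $|u_{n}|$ lies in $C_{c}(\mathbb{Z}^{d})\subset H_{s,p}$, with $H_{s,p}$-norm bounded by $\|u_{n}\|_{H_{s,p}}$. By reflexivity of $H_{s,p}$, a subsequence satisfies $u_{n}^{+}\rightharpoonup v$ for some $v\in H_{s,p}$. Independently, from \eqref{lx} we have $u_{n}\to u$ in $\ell^{p}(\mathbb{Z}^{d})$, which on the discrete lattice forces pointwise convergence $u_{n}(x)\to u(x)$, hence $u_{n}^{+}(x)\to u^{+}(x)$ at every vertex. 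Since pointwise evaluation $u\mapsto u(x)$ is a bounded linear functional on $H_{s,p}$ (one has $|u(x)|^{p}\le h(x)^{-1}\|u\|_{H_{s,p}}^{p}$), weak convergence gives $u_{n}^{+}(x)\to v(x)$, so $v=u^{+}$ and therefore $u^{+}\in H_{s,p}$. The same argument handles $u^{-}$ and $|u|$.

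The main obstacle is precisely this last step: the map $u\mapsto u^{+}$ is not obviously continuous on $H_{s,p}$ in the strong topology because inequalities of the form $|(a^{+}-c^{+})-(b^{+}-d^{+})|\le |(a-c)-(b-d)|$ fail in general (a simple counterexample is $a=0,b=1,c=-1,d=0$), so one cannot just pass strong convergence of $u_{n}$ through the positive part. Replacing strong convergence of $u_{n}^{+}$ by weak compactness in the reflexive space $H_{s,p}$, and identifying the weak limit by pointwise evaluation, is what makes the argument go through cleanly.
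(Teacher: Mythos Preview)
Your argument is correct. For the norm estimate, the paper reaches the same pointwise inequality $|\nabla^s u^{\pm}|\le |\nabla^s u|$ by a slightly different route: it expands $|\nabla^s u|^2=|\nabla^s u^+|^2+|\nabla^s u^-|^2+2\,\nabla^s u^+\nabla^s u^-$ and checks that the cross term is nonnegative (since $W_s(x,y)>0$ and $-u^+(x)u^-(y)-u^+(y)u^-(x)\ge 0$); your direct appeal to the $1$-Lipschitz bound $|a^{\pm}-b^{\pm}|\le |a-b|$ is a cleaner packaging of the same fact. For $|u|$ the paper instead writes $|u|=u^+-u^-$ and uses that $H_{s,p}$ is a linear space. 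The genuine difference is your second step: the paper simply concludes $u^{\pm}\in H_{s,p}$ from finiteness of the norm and does not address the completion issue at all, whereas you close it via reflexivity, weak subsequential compactness of $(u_n^{\pm})$, and identification of the weak limit through pointwise evaluation functionals. Your extra argument buys a genuinely complete proof (at the mild cost of invoking reflexivity and $(h_1)$, both already recorded in the paper); the paper's version is shorter but leaves that point implicit.
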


\begin{proof}
Note that $u=u^{+}+u^{-}$ and  $u^{+} u^{-}=0$. A direct calculation yields that
\begin{align}\label{ar}
\nabla^{s} u^{+} \nabla^{s} u^{-}(x) & =\frac{1}{2} \sum_{y\in\mathbb{Z}^d, y \neq x} W_{s}(x, y)\left(u^{+}(x)-u^{+}(y)\right)\left(u^{-}(x)-u^{-}(y)\right) \nonumber\\
& =-\frac{1}{2} \sum_{y\in\mathbb{Z}^d, y \neq x} W_{s}(x, y)\left(u^{+}(x) u^{-}(y)+u^{+}(y) u^{-}(x)\right) \\
& \geq 0 \nonumber.
\end{align}
Therefore, we get that
\begin{align*}
 |\nabla^s u|^{2}&=\frac{1}{2} \sum_{y\in\mathbb{Z}^d, y \neq x} W_{s}(x, y)\left(u(x)-u(y)\right)^2 \\
&=\frac{1}{2} \sum_{y\in\mathbb{Z}^d, y \neq x} W_{s}(x, y)\left(u^+(x)-u^+(y)+u^-(x)-u^-(y)\right)^2 \\
&=\frac{1}{2} \sum_{y\in\mathbb{Z}^d, y \neq x} W_{s}(x, y)\left[(u^+(x)-u^+(y))^2+(u^-(x)-u^-(y))^2+2(u^+(x)-u^+(y))(u^-(x)-u^-(y))\right] \\
&= \left|\nabla^s u^{+}\right|^{2}+\left|\nabla^s u^{-}\right|^{2}+2 \nabla^s u^{+} \nabla ^su^{-} \\ &\geq\left|\nabla^s u^{-}\right|^{2}.   
\end{align*}
This means that for any $2\leq p<\infty$,
\begin{equation*}
\left|\nabla^s u^{-}\right|^{p} \leq|\nabla^s u|^{p} . 
\end{equation*}
Moreover, we have $\left|u^{-}\right|^{p} \leq|u|^{p}$. Then for $u \in H_{s,p}$,  we obtain that $u^{-} \in H_{s, p}$.
A similar argument says $u^+\in H_{s,p}$. Since $H_{s,p}$ is a Banach space and $|u|=u^+-u^-$, we get that $|u|\in H_{s,p}.$
\end{proof}

Now we introduce the well-known discrete Hardy-Littlewood-Sobolev (HLS for abbreviation) inequality, see \cite{LW1,W1}.

\begin{lm}\label{lm1}

Let $0<\alpha <d,\,1<r,t<\infty$ and $\frac{1}{r}+\frac{1}{t}+\frac{d-\alpha}{d}=2$. Then we have the discrete
HLS inequality
\begin{equation}\label{bo}
\int_{\mathbb{Z}^d}(R_\alpha\ast u)(x)v(x)\,d\mu\leq C_{r,t,\alpha,d}\|u\|_r\|v\|_t,\quad u\in \ell^r(\mathbb{Z}^d),\,v\in \ell^t(\mathbb{Z}^d).
\end{equation}
And an equivalent form is
\begin{eqnarray*}\label{p1}
\|R_\alpha\ast u\|_{\frac{dr}{d-\alpha r}}\leq C_{r,\alpha,d}\|u\|_r,\quad u\in \ell^r(\mathbb{Z}^d),
\end{eqnarray*}
where $1<r<\frac{d}{\alpha}$.
\end{lm}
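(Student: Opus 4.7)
The plan is to reduce the discrete inequality to the classical Hardy--Littlewood--Sobolev inequality on $\mathbb{R}^d$ via a comparison between discrete sums and integrals over unit cubes. First, I would split the bilinear form into off-diagonal and diagonal contributions. The condition $\alpha\in(0,d)$ ensures that $R_\alpha(x,x)=\frac{K_\alpha}{(2\pi)^d}\int_{\mathbb{T}^d}\mu^{-\alpha/2}(k)\,dk$ is finite, since the singularity $\mu^{-\alpha/2}(k)\sim|k|^{-\alpha}$ at $k=0$ is integrable in dimension $d$; moreover, the cited estimate from \cite{MC} gives $R_\alpha(x,y)\asymp|x-y|^{-(d-\alpha)}$ uniformly for $|x-y|\ge 1$. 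Thus it suffices to prove
$$\sum_{x\ne y}\frac{u(x)v(y)}{|x-y|^{d-\alpha}}+\sum_{x\in\mathbb{Z}^d}u(x)v(x)\le C\|u\|_r\|v\|_t.$$

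For the off-diagonal piece, I would associate to each $u\in\ell^r(\mathbb{Z}^d)$ the step function $\tilde{u}(\xi):=u(x)$ on the unit cube $Q_x:=x+[-1/2,1/2)^d$, and similarly $\tilde{v}$, obtaining the isometry $\|\tilde{u}\|_{L^r(\mathbb{R}^d)}=\|u\|_{\ell^r(\mathbb{Z}^d)}$. The crucial comparison is that for distinct $x,y\in\mathbb{Z}^d$ (so that $|x-y|\ge 1$) and any $(\xi,\eta)\in Q_x\times Q_y$ one has $|\xi-\eta|\le|x-y|+\sqrt{d}\le(1+\sqrt{d})|x-y|$, which yields
$$|x-y|^{-(d-\alpha)}\le C_d\int_{Q_x\times Q_y}|\xi-\eta|^{-(d-\alpha)}\,d\xi\,d\eta.$$
Summing over $x\ne y$ and invoking the continuous HLS inequality on $\mathbb{R}^d$ gives
$$\sum_{x\ne y}\frac{u(x)v(y)}{|x-y|^{d-\alpha}}\le C_d\iint_{\mathbb{R}^d\times\mathbb{R}^d}\frac{\tilde{u}(\xi)\tilde{v}(\eta)}{|\xi-\eta|^{d-\alpha}}\,d\xi\,d\eta\le C'\|\tilde{u}\|_r\|\tilde{v}\|_t=C'\|u\|_r\|v\|_t.$$

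For the diagonal piece, the scaling relation $\frac{1}{r}+\frac{1}{t}=1+\frac{\alpha}{d}>1$ forces $t$ to be strictly less than the H\"older conjugate $r'$; the nested embedding $\ell^t\hookrightarrow\ell^{r'}$ together with H\"older's inequality then gives $\sum_x u(x)v(x)\le\|u\|_r\|v\|_{r'}\le\|u\|_r\|v\|_t$. Combining both contributions proves (\ref{bo}), and the equivalent linear form is obtained by duality: testing $R_\alpha\ast u$ against arbitrary $v\in\ell^t$ with $t$ the conjugate exponent to $\frac{dr}{d-\alpha r}$ recovers the $L^p$ estimate. The main technical input is the uniform Green's function asymptotic imported from \cite{MC}; the rest is a clean dictionary between $\mathbb{Z}^d$ and $\mathbb{R}^d$. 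The mild subtlety that drives the separation into two cases is that the cube comparison $|x-y|^{-(d-\alpha)}\lesssim\int_{Q_x\times Q_y}|\xi-\eta|^{-(d-\alpha)}$ requires $|x-y|\ge 1$, making the diagonal term genuinely discrete and absorbed by the sharp failure of H\"older duality that the scaling condition $\frac{1}{r}+\frac{1}{t}>1$ provides.
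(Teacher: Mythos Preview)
The paper does not prove this lemma at all; it is quoted as a known result with a reference to \cite{LW1,W1}. Your proposal therefore supplies a proof where the paper gives none, and the route you take---extending $u,v$ to step functions on unit cubes, comparing the off-diagonal kernel with the continuous Riesz kernel, and invoking the Euclidean HLS inequality, while handling the diagonal via the embedding $\ell^t\hookrightarrow\ell^{r'}$ forced by $\tfrac{1}{r}+\tfrac{1}{t}>1$---is a standard and correct one. Two small remarks: first, the bilinear inequality as stated is one-sided and should be read for nonnegative $u,v$ (or with absolute values on the left), and your cube comparison and the continuous HLS both tacitly use this; second, the asymptotic $R_\alpha(x,y)\asymp|x-y|^{-(d-\alpha)}$ from \cite{MC} is stated in the paper only for $|x-y|\gg1$, so strictly speaking you also need a uniform upper bound for the finitely many nonzero $|x-y|$ below that threshold, which follows immediately from the finiteness of $R_\alpha(x,y)$ for all $x\ne y$ (same integrability argument you give for the diagonal value). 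With those caveats the argument is complete.
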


\begin{lm}\label{lhf}
Let $(f_1)$-$(f_3)$ hold. If $u\in H_{s,p}\backslash\{0\}$ with $u\geq 0$, then for $t\geq 1$, we have
\begin{equation}\label{95}
 \int_{\mathbb{Z}^d}(R_\alpha \ast F(tu))F(tu)\,d\mu\geq t^\theta\int_{\mathbb{Z}^d}(R_\alpha \ast F(u))F(u)\,d\mu.   
\end{equation}
\end{lm}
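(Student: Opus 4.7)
The plan is to reduce the integral inequality to a pointwise inequality $F(tu) \geq t^{\theta/2} F(u)$, then exploit the positivity of the Riesz-type kernel $R_\alpha$ and of $F$ to multiply through by another factor of $t^{\theta/2}$ via the convolution.

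First I would establish the pointwise estimate
\begin{equation*}
F(tr) \geq t^{\theta/2} F(r), \qquad t \geq 1,\ r \geq 0.
\end{equation*}
For $r=0$ both sides vanish by $(f_1)$, so it suffices to treat $r>0$. Consider the auxiliary function $g(s) = F(s) s^{-\theta/2}$ on $(0,\infty)$. A direct differentiation combined with $(f_3)$ gives
\begin{equation*}
g'(s) = \frac{2f(s)s - \theta F(s)}{2\, s^{\theta/2+1}} \geq 0,
\end{equation*}
so $g$ is non-decreasing. Applied to $s = r$ and $s = tr$ with $t \geq 1$, this yields $F(tr)/(tr)^{\theta/2} \geq F(r)/r^{\theta/2}$, which rearranges to the claimed pointwise inequality. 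Note that $F \geq 0$ by $(f_3)$, so the inequality makes sense and preserves signs.

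Second, I would push this through the convolution. Since $u \geq 0$, the above gives $F(tu(y)) \geq t^{\theta/2} F(u(y))$ for every $y \in \mathbb{Z}^d$. The kernel $R_\alpha(x,y)$ is positive (as the Green's function of the discrete fractional Laplacian, with the Riesz-type decay stated after the definition in the introduction), so summing against $R_\alpha$ preserves the inequality:
\begin{equation*}
(R_\alpha \ast F(tu))(x) = \sum_{y \in \mathbb{Z}^d} R_\alpha(x,y) F(tu(y)) \geq t^{\theta/2} (R_\alpha \ast F(u))(x).
\end{equation*}
Multiplying both sides by $F(tu)(x) \geq t^{\theta/2} F(u)(x) \geq 0$ and summing over $x \in \mathbb{Z}^d$ produces the factor $t^{\theta/2} \cdot t^{\theta/2} = t^\theta$, yielding \eqref{95}.

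The only subtle point is the monotonicity step, which is routine once one writes $F/s^{\theta/2}$. The rest is bookkeeping: nonnegativity of $F$ and $R_\alpha$, and the fact that $u \geq 0$ ensures we only need the inequality at nonnegative arguments. No integrability issue arises because both sides of \eqref{95} may be simultaneously $+\infty$, and the proof of the inequality goes through in either case (using the convention $t^\theta \cdot \infty = \infty$ for $t \geq 1$).
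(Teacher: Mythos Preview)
Your proposal is correct and follows exactly the standard route that the referenced proof in \cite[Lemma~2.5]{W8} uses (the paper itself omits the argument and just cites that reference): one first shows $F(tr)\geq t^{\theta/2}F(r)$ for $t\geq 1$, $r\geq 0$ via the monotonicity of $s\mapsto F(s)s^{-\theta/2}$ granted by $(f_3)$, and then uses the nonnegativity of $R_\alpha$ and of $F$ to upgrade this to the convolution inequality, gaining the second factor $t^{\theta/2}$.
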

\begin{proof}
The proof is similar to that of \cite[Lemma 2.5]{W8}. We omit it here.
\end{proof}

In the following, we introduce a compactness result for $H_{s,p}$, which  can be seen in \cite{ZY}.

\begin{lm}\label{lgg}
Let $(h_1)$, $(h_2)$ hold. Then for any $q\in [p,\infty)$, $H_{s,p}$ is embedded compactly into $\ell^{q}(\mathbb{Z}^d)$. That is, for any bounded sequence $\left\{u_{n}\right\} \subset H_{s,p}$, there exists $u \in H_{s,p}$ such that, up to a subsequence, $u_{n} \rightharpoonup u$ weakly in $H_{s,p}$ and $u_{n} \rightarrow u$ strongly in $ \ell^{q}(\mathbb{Z}^d).$
\end{lm}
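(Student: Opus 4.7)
The plan is to follow the standard compactness argument for coercive-potential spaces, transplanted to the lattice. Since $H_{s,p}$ is a reflexive Banach space, any bounded sequence $\{u_n\}$ admits a weakly convergent subsequence $u_n \rightharpoonup u$ in $H_{s,p}$; by inequality (\ref{lx}), the point evaluation $\delta_x$ is a bounded linear functional on $H_{s,p}$, so weak convergence forces pointwise convergence $u_n(x)\to u(x)$ for every $x\in\mathbb{Z}^d$.

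To upgrade this to strong convergence in $\ell^p(\mathbb{Z}^d)$, I would use $(h_2)$ to split the sum into near and far parts of $x_0$. Fix $\varepsilon>0$; by $(h_2)$, choose $R>0$ so large that $h(x)\geq 1/\varepsilon$ whenever $|x-x_0|>R$. Let $B_R=\{x\in\mathbb{Z}^d:|x-x_0|\leq R\}$, which is a finite set. Write
\begin{equation*}
\|u_n-u\|_p^p = \sum_{x\in B_R}|u_n(x)-u(x)|^p + \sum_{x\notin B_R}|u_n(x)-u(x)|^p.
\end{equation*}
The first sum is a finite sum of quantities that tend to $0$ by pointwise convergence, so it goes to zero. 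For the second sum,
\begin{equation*}
\sum_{x\notin B_R}|u_n(x)-u(x)|^p \leq \varepsilon\sum_{x\notin B_R}h(x)|u_n(x)-u(x)|^p \leq \varepsilon\|u_n-u\|_{H_{s,p}}^p \leq \varepsilon C,
\end{equation*}
where $C$ is a uniform upper bound on $\|u_n-u\|_{H_{s,p}}^p$ (which exists because $\{u_n\}$ is bounded in $H_{s,p}$ and $u\in H_{s,p}$ by weak lower semicontinuity of the norm). Letting $n\to\infty$ and then $\varepsilon\to 0^+$ gives $u_n\to u$ in $\ell^p(\mathbb{Z}^d)$.

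To extend to every $q\in[p,\infty)$, I would invoke the discrete interpolation inequality: on a counting-measure space one has the nesting $\|f\|_q\leq \|f\|_p$ for $q\geq p$, proved via $\|f\|_q^q\leq \|f\|_\infty^{q-p}\|f\|_p^p$ combined with $\|f\|_\infty\leq \|f\|_p$. Applying this with $f=u_n-u$ yields $\|u_n-u\|_q\leq \|u_n-u\|_p\to 0$, completing the proof.

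The main obstacle is really just the cutoff step: the argument depends crucially on $(h_2)$ forcing $h$ to blow up at infinity, which is what compensates for the absence of a discrete Rellich-type compactness from bounded domains (the lattice has no compact subsets in the topological sense, only finite ones). Without $(h_2)$, translates would provide bounded non-compact sequences, so the weight-based tail estimate is the indispensable ingredient.
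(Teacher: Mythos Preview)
Your argument is correct. Note, however, that the paper does not actually prove this lemma: it is stated with the remark that the result ``can be seen in \cite{ZY}'' and no proof is given in the text. So there is no in-paper proof to compare against; what you have written is presumably close to what appears in the cited reference, since the coercive-potential tail estimate combined with pointwise convergence on the finite near part is the standard (and essentially the only) route to compactness in this setting. Your use of reflexivity to extract a weak limit, the identification of $\delta_x$ as a bounded functional via~(\ref{lx}) to get pointwise convergence, the $(h_2)$-based splitting, and the discrete interpolation $\|f\|_q\le\|f\|_p$ for $q\ge p$ are all sound and exactly what one expects.
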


\begin{lm}\label{lo}
Let $(h_1)$, $(h_2)$ and  $(f_1)$-$(f_3)$ hold. If $\left\{u_{n}\right\} $ is bounded in $H_{s,p}$ and $u_n\geq 0$, then there exists $u \in H_{s,p}$ with $u\geq 0$ such that
$$\lim\limits_{n\rightarrow\infty} \int_{\mathbb{Z}^d}(R_\alpha \ast F(u_n))F(u_n)\,d\mu=\int_{\mathbb{Z}^d}(R_\alpha \ast F(u))F(u)\,d\mu,$$
and
$$\lim\limits_{n\rightarrow\infty}\int_{\mathbb{Z}^d}(R_\alpha \ast F(u_n))f(u_n)u_n\,d\mu=\int_{\mathbb{Z}^d}(R_\alpha \ast F(u))f(u)u\,d\mu.$$

\end{lm}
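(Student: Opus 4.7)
The plan is to combine the compact embedding from Lemma \ref{lgg} with the HLS inequality from Lemma \ref{lm1}. First I would pass to a subsequence (still denoted $\{u_n\}$) and a limit $u\in H_{s,p}$ with $u_n\rightharpoonup u$ weakly in $H_{s,p}$ and $u_n\to u$ strongly in $\ell^q(\mathbb{Z}^d)$ for every $q\in[p,\infty)$, which is possible by Lemma \ref{lgg}. In the discrete setting, strong convergence in $\ell^p$ forces pointwise convergence, so $u(x)=\lim u_n(x)\geq 0$ for every $x\in\mathbb{Z}^d$.

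Set $r:=\frac{2d}{d+\alpha}$, the exponent for which Lemma \ref{lm1} applies with equal integrabilities on the two factors. By $(f_2)$ and the inequality $\alpha<d$, both $pr\geq p$ and $\tau r>p$, so $u_n\to u$ strongly in $\ell^{pr}(\mathbb{Z}^d)$ and in $\ell^{\tau r}(\mathbb{Z}^d)$. The core step is to show that $F(u_n)\to F(u)$ and $f(u_n)u_n\to f(u)u$ strongly in $\ell^r(\mathbb{Z}^d)$. For this, I would use (\ref{ad}) and (\ref{ae}) to obtain the pointwise bounds $|F(u_n)|^r+|f(u_n)u_n|^r\leq C(|u_n|^{pr}+|u_n|^{\tau r})$, whose right-hand side converges in $\ell^1(\mathbb{Z}^d)$ to $C(|u|^{pr}+|u|^{\tau r})$. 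Combined with the pointwise convergences $F(u_n(x))\to F(u(x))$ and $f(u_n(x))u_n(x)\to f(u(x))u(x)$, which follow from continuity of $f$, the generalized dominated convergence theorem yields the claimed $\ell^r$ convergence.

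Finally, by writing
\[
\int(R_\alpha\ast F(u_n))F(u_n)\,d\mu-\int(R_\alpha\ast F(u))F(u)\,d\mu
\]
as $\int(R_\alpha\ast(F(u_n)-F(u)))F(u_n)\,d\mu+\int(R_\alpha\ast F(u))(F(u_n)-F(u))\,d\mu$ and applying Lemma \ref{lm1} with $r=t$, each summand is bounded by $C\,\|F(u_n)-F(u)\|_r\bigl(\|F(u_n)\|_r+\|F(u)\|_r\bigr)$, which tends to zero. The identical splitting with one copy of $F(u_n)$ replaced by $f(u_n)u_n$ handles the second limit. The main obstacle is precisely the strong $\ell^r$ convergence of the Nemytskii-type compositions $u\mapsto F(u)$ and $u\mapsto f(u)u$; the critical ingredient here is the hypothesis $\tau>\frac{(d+\alpha)p}{2d}$ in $(f_2)$, which makes the upper growth exponent $\tau r$ subcritical with respect to the compact embedding Lemma \ref{lgg}.
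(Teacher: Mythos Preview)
Your proposal is correct and follows the same overall strategy as the paper: extract a weak/strong limit via Lemma \ref{lgg}, show the Nemytskii compositions converge strongly in $\ell^{2d/(d+\alpha)}$ via dominated convergence with the growth bounds (\ref{ad})--(\ref{ae}), and conclude by splitting the difference and applying the HLS inequality (\ref{bo}).

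The one genuine difference lies in the second limit. You treat $t\mapsto f(t)t$ as a single function with the same growth as $F$ (namely $|f(t)t|\leq C(t^p+t^\tau)$), so the argument for $f(u_n)u_n\to f(u)u$ in $\ell^r$ is a verbatim copy of the argument for $F(u_n)\to F(u)$, and the final two-term splitting is identical to part (i). The paper instead decomposes the difference into \emph{three} terms, separating the factor $f(u_n)u_n$ into $f(u_n)(u_n-u)$ and $(f(u_n)-f(u))u$, which forces it to prove convergence of $f(u_n)-f(u)$ in an auxiliary Lebesgue space with the exponent $\frac{2d}{d+\alpha}\cdot\frac{\tau p}{(\tau-1)(p-1)}$ and then pair it with $u$ via H\"{o}lder. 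Your route is shorter and avoids introducing that extra exponent; the paper's route would be needed if one wanted convergence of $\int(R_\alpha*F(u_n))f(u_n)\phi\,d\mu$ for a fixed test function $\phi\neq u$, but for the statement at hand your simplification is legitimate.
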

\begin{proof}
Note that $\left\{u_{n}\right\} $ is bounded in $H_{s,p}$.
On the one hand, by Lemma \ref{lgg}, there exists $u \in H_{s,p}$ such that
\begin{equation}\label{vv}
 u_n\rightharpoonup u,\quad \text{in}~H_{s,p},\qquad u_n\rightarrow u,\quad \text{strongly}~\text{in}~\ell^q(\mathbb{Z}^d), ~q\geq p.   
\end{equation}
On the other hand, by (\ref{lx}), we have that $\{u_n\}$ is bounded in $\ell^p(\mathbb{Z}^d)$, and hence bounded in $\ell^\infty(\mathbb{Z}^d)$. Therefore, by diagonal principle, there exists a subsequence of $\{u_n\}$  pointwise converging to $u$, that is,
\begin{equation}\label{an}
u_n\rightarrow u,\quad \text{pointwise}~\text{in}~\mathbb{Z}^d.   \end{equation}
Since $u_n\geq 0$, we have $u\geq 0$.

(i) A direct calculation yields that
$$
\begin{aligned}
& \left|\int_{\mathbb{Z}^d}(R_\alpha \ast F(u_n))F(u_n)\,d\mu-\int_{\mathbb{Z}^d}(R_\alpha \ast F(u))F(u)\,d\mu\right|\\ \leq &\left|\int_{\mathbb{Z}^d}(R_\alpha \ast F(u_n))\left(F(u_n)-F(u)\right)\,d\mu\right|+\left|\int_{\mathbb{Z}^d}\left(R_\alpha \ast (F(u_n)-F(u)\right)F(u)\,d\mu\right| \\ =&: I_1+I_2.  
\end{aligned}
$$
First, we claim that
\begin{equation}\label{ah}
\lim\limits_{n\rightarrow\infty}\int_{\mathbb{Z}^d} |F(u_n)-F(u)|^{\frac{2d}{d+\alpha}}\,d\mu=0.  \end{equation}
In fact, by (\ref{an}), one gets that $F(u_{n}) \rightarrow F(u)$ pointwise in $\mathbb{Z}^d$. By (\ref{ae}), we deduce that
$$
\begin{aligned}
 |F(u_n)-F(u)|^{\frac{2d}{d+\alpha}}\leq &C\left( |F(u_n)|^{\frac{2d}{d+\alpha}} +|F(u)|^{\frac{2d}{d+\alpha}} \right)\\ \leq &C\left(u_n^p+u_n^\tau\right)^{\frac{2d}{d+\alpha}}+C |F(u)|^{\frac{2d}{d+\alpha}} \\ \leq & C\left(u_n^{\frac{2dp}{d+\alpha}}+u_n^{\frac{2d\tau}{d+\alpha}}\right)+C |F(u)|^{\frac{2d}{d+\alpha}}.
\end{aligned}
$$
Notice that $\frac{2dp}{d+\alpha}>p$ and $\frac{2d\tau}{d+\alpha}>p$. By (\ref{vv}), (\ref{an}) and Lebesgue dominated convergence theorem, we get the claim (\ref{ah}).

For $I_1$,  it follows from (\ref{ae}), the HLS inequality (\ref{bo}) and (\ref{ah}) that
$$
\begin{aligned}
    I_1=&\left|\int_{\mathbb{Z}^d}(R_\alpha \ast F(u_n))(F(u_n)-F(u))\,d\mu\right|\\ \leq & C\left(\int_{\mathbb{Z}^d} |F(u_n)|^{\frac{2d}{d+\alpha}}\,d\mu  \right)^{\frac{d+\alpha}{2d}}\left(\int_{\mathbb{Z}^d} |F(u_n)-F(u)|^{\frac{2d}{d+\alpha}}\,d\mu  \right)^{\frac{d+\alpha}{2d}}\\ \leq & C\left(\int_{\mathbb{Z}^{d}}\left(u_{n}^{p}+u_{n}^{\tau}\right)^{\frac{2d}{d+\alpha}} d \mu\right)^{\frac{d+\alpha}{2 d}}\left(\int_{\mathbb{Z}^d} |F(u_n)-F(u_0)|^{\frac{2d}{d+\alpha}}\,d\mu  \right)^{\frac{d+\alpha}{2d}}\\ \leq & C
\left(\left\|u_{n}\right\|_{\frac{2dp }{d+\alpha}}^{p}+\left\|u_{n}\right\|_{\frac{2 d\tau}{d+\alpha}}^{\tau}\right)\left(\int_{\mathbb{Z}^d} |F(u_n)-F(u_0)|^{\frac{2d}{d+\alpha}}\,d\mu  \right)^{\frac{d+\alpha}{2d}}\\ \leq &C
\left(\left\|u_{n}\right\|_{H_{s,p}}^{p}+\left\|u_{n}\right\|_{H_{s,p}}^{\tau}\right)\left(\int_{\mathbb{Z}^d} |F(u_n)-F(u_0)|^{\frac{2d}{d+\alpha}}\,d\mu  \right)^{\frac{d+\alpha}{2d}}\\ \leq & C\left(\int_{\mathbb{Z}^d} |F(u_n)-F(u_0)|^{\frac{2d}{d+\alpha}}\,d\mu  \right)^{\frac{d+\alpha}{2d}}\\ \rightarrow &0, \quad n\rightarrow\infty,
\end{aligned}
$$ 
where we have used  $\|u_n\|_{\frac{2dp}{d+\alpha}}\leq \|u_n\|_{p}\leq C\|u_n\|_{H_{s,p}}$ since $\frac{2dp}{d+\alpha}>p$ in the forth inequality.

For $I_2$, similarly, we have
$$
\begin{aligned}
    I_2=&\left|\int_{\mathbb{Z}^d}\left(R_\alpha \ast (F(u_n)-F(u)\right)F(u)\,d\mu\right|\\ \leq & C\left(\int_{\mathbb{Z}^d} |F(u_n)-F(u)|^{\frac{2d}{d+\alpha}}\,d\mu  \right)^{\frac{d+\alpha}{2d}}\left(\int_{\mathbb{Z}^d} |F(u)|^{\frac{2d}{d+\alpha}}\,d\mu  \right)^{\frac{d+\alpha}{2d}}\\ \leq & C\left(\int_{\mathbb{Z}^d} |F(u_n)-F(u_0)|^{\frac{2d}{d+\alpha}}\,d\mu  \right)^{\frac{d+\alpha}{2d}}\\ \rightarrow &0,\quad n\rightarrow\infty.
\end{aligned}
$$
 The above arguments imply $$\lim\limits_{n\rightarrow\infty} \int_{\mathbb{Z}^d}(R_\alpha \ast F(u_n))F(u_n)\,d\mu=\int_{\mathbb{Z}^d}(R_\alpha \ast F(u))F(u)\,d\mu.$$

 \
 \

 (ii) A straightforward calculation gives that $$
\begin{aligned}
&\left|\int_{\mathbb{Z}^d}(R_\alpha \ast F(u_n))f(u_n)u_n\,d\mu-\int_{\mathbb{Z}^d}(R_\alpha \ast F(u_0))f(u_0)u_0\,d\mu\right|\\ =&\left|\int_{\mathbb{Z}^d}(R_\alpha \ast F(u_n))f(u_n)u_0\,d\mu-\int_{\mathbb{Z}^d}(R_\alpha \ast F(u_0))f(u_0)u_0\,d\mu+\int_{\mathbb{Z}^d}(R_\alpha \ast F(u_n))f(u_n)(u_n-u_0)\,d\mu\right|\\\leq &\left|\int_{\mathbb{Z}^d}(R_\alpha \ast \left(F(u_n)-F(u_0)\right)f(u_0)u_0\,d\mu\right|+\left|\int_{\mathbb{Z}^d}(R_\alpha \ast F(u_n))\left(f(u_n)-f(u_0)\right)u_0\,d\mu\right|\\&+\left|\int_{\mathbb{Z}^d}(R_\alpha \ast F(u_n))f(u_n)(u_n-u_0)\,d\mu\right|.
\end{aligned}
$$
We prove that
\begin{equation}\label{ag}
\lim\limits_{n\rightarrow\infty} \int_{\mathbb{Z}^d}\left|(f(u_n)-f(u_0))\right|^{\frac{2d}{d+\alpha}\frac{\tau p}{(\tau-1)(p-1)}}\,d\mu=0.  \end{equation}
Indeed, by (\ref{an}), one gets that $f(u_{n}) \rightarrow f(u_0)$ pointwise in $\mathbb{Z}^d$. By (\ref{ad}), we deduce that
$$
\begin{aligned}
 |f(u_n)-f(u_0)|^{\frac{2d}{d+\alpha}\frac{\tau p}{(\tau-1)(p-1)}}\leq &C\left( |f(u_n)|^{\frac{2d}{d+\alpha}\frac{\tau p}{(\tau-1)(p-1)}} +|f(u_0)|^{\frac{2d}{d+\alpha}\frac{\tau p}{(\tau-1)(p-1)}} \right)\\ \leq &C\left(u_n^{p-1}+u_n^{\tau-1}\right)^{\frac{2d}{d+\alpha}\frac{\tau p}{(\tau-1)(p-1)}}+C |f(u_0)|^{\frac{2d}{d+\alpha}\frac{\tau p}{(\tau-1)(p-1)}} \\ \leq & C\left(u_n^{\frac{2d}{d+\alpha}\frac{\tau p}{(\tau-1)}}+u_n^{\frac{2d}{d+\alpha}\frac{\tau p}{(p-1)}}\right)+C |f(u_0)|^{\frac{2d}{d+\alpha}\frac{\tau p}{(\tau-1)(p-1)}}.
\end{aligned}
$$
Note that $\frac{2d}{d+\alpha}\frac{\tau p}{(p-1)}>p$ and $\frac{2d}{d+\alpha}\frac{\tau p}{(\tau-1)}>p$. By (\ref{vv}), (\ref{an}) and Lebesgue dominated convergence theorem, we get the claim (\ref{ag}).
The rest proof is similar to that of (i), one can get $$\lim\limits_{n\rightarrow\infty}\int_{\mathbb{Z}^d}(R_\alpha \ast F(u_n))f(u_n)u_n\,d\mu=\int_{\mathbb{Z}^d}(R_\alpha \ast F(u))f(u)u\,d\mu.$$ 

\end{proof}

Finally, in order to prove our results, we make a simple reduction argument and assume $f(t) \equiv 0$ for all $t<0$. Let
$$
\widetilde{f}(t)= \begin{cases}0, & t<0, \\ f(t), & t \geq 0.\end{cases}
$$
By ($f_3$), we get that $\widetilde{f} \geq 0$. Let $u\in H_{s,p}$ be a nontrivial weak solution to the equation
\begin{equation}\label{aq}
(-\Delta)^s_{p} u+h(x)|u|^{p-2} u=\left(R_{\alpha} *\widetilde{F}(u)\right)\widetilde{f}(u) 
\end{equation}
on lattice graphs $\mathbb{Z}^{d} $. Then we have the following result.

\begin{lm}\label{i0}
If $u \in H_{s,p}$ is a nontrivial weak solution to the equation  (\ref{aq}), then it is a strictly positive solution to the equation  (\ref{aa}).    
\end{lm}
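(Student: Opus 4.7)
The plan is to argue in two stages: first establish $u \geq 0$ using the truncation $\widetilde f$, then upgrade to strict positivity via a discrete strong maximum principle.

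For the first stage, I would test the weak form of (\ref{aq}) against the admissible function $\phi = u^- \in H_{s,p}$ (admissible by Lemma \ref{i1}). The right-hand side $\int_{\mathbb{Z}^d}(R_\alpha * \widetilde{F}(u))\widetilde{f}(u) u^-\,d\mu$ vanishes since the pointwise product $\widetilde{f}(u)\,u^-$ is identically zero: on $\{u \geq 0\}$ the factor $u^-$ vanishes, and on $\{u < 0\}$ the factor $\widetilde{f}(u)$ vanishes by construction. The potential part of the left-hand side equals $\int h(x)|u^-|^p\,d\mu \geq h_0\|u^-\|_p^p$ after a pointwise sign check. The critical step is showing the fractional gradient contribution is non-negative: decomposing $u=u^++u^-$ gives
\[
\nabla^s u\,\nabla^s u^-(x) \;=\; |\nabla^s u^-|^2(x) \;+\; \nabla^s u^+\,\nabla^s u^-(x),
\]
and the cross term is pointwise non-negative by the identity (\ref{ar}) from the proof of Lemma \ref{i1}. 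Multiplying by the non-negative weight $|\nabla^s u|^{p-2}(x)$ and summing, the gradient term is $\geq 0$, so the whole left-hand side is $\geq h_0\|u^-\|_p^p$. Since it also equals the vanishing right-hand side, $u^- \equiv 0$, i.e., $u \geq 0$. Then $\widetilde{f}(u)=f(u)$ and $\widetilde{F}(u)=F(u)$, so $u$ solves (\ref{aa}).

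For the second stage, I would use the pointwise form of (\ref{aa}) provided by the remark following Lemma \ref{l0}. Assume $u(x_0)=0$ for some $x_0$; evaluating the equation at $x_0$ and invoking $f(0)=0$ from $(f_1)$ yields $(-\Delta)^s_p u(x_0)=0$. Expanding,
\[
\tfrac{1}{2}\sum_{y \neq x_0} W_s(x_0,y)\bigl[|\nabla^s u|^{p-2}(x_0) + |\nabla^s u|^{p-2}(y)\bigr]\,u(y) \;=\; 0,
\]
a sum of non-negative terms. Nontriviality of $u$ together with the identity $|\nabla^s u|^2(x_0) = \tfrac{1}{2}\sum_{y} W_s(x_0,y)\,u(y)^2$ and $W_s>0$ forces $|\nabla^s u|(x_0)>0$, so $|\nabla^s u|^{p-2}(x_0) > 0$. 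Vanishing of each summand then compels $u(y)=0$ for every $y$, contradicting $u \not\equiv 0$; hence $u>0$ everywhere.

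The main obstacle is the first stage: linearity fails for $p>2$, so one cannot immediately test with $u^-$ and read off the sign. The sign control rests entirely on the pointwise non-negativity of the cross term $\nabla^s u^+\nabla^s u^-$, which is the content of (\ref{ar}); once this is in hand, the remaining inequalities are pointwise bookkeeping and the strong maximum principle is a direct consequence of $f(0)=0$ and strict positivity of the kernel $W_s$.
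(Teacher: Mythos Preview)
Your proposal is correct and follows essentially the same route as the paper: test the weak formulation with $u^-$, use the pointwise non-negativity of $\nabla^s u^+\nabla^s u^-$ from (\ref{ar}) to control the gradient term, conclude $u^-\equiv 0$, and then run the discrete strong maximum principle based on $f(0)=0$ and $W_s>0$. Your treatment of the strict positivity step is in fact slightly more careful than the paper's, since you explicitly verify $|\nabla^s u|^{p-2}(x_0)>0$ before concluding that every summand must vanish.
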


\begin{proof}
Let $u \in H_{s,p}$ be a nontrivial weak solution to the equation (\ref{aq}). 
By Lemma \ref{i1}, one gets that $u^-\in H_{s,p}$. Moreover, we have that
\begin{align*}
\nabla^s u \nabla^s u^{-}&=\frac{1}{2} \sum_{y\in\mathbb{Z}^d, y \neq x} W_{s}(x, y)\left(u(x)-u(y)\right)\left(u^-(x)-u^-(y)\right) \\
&=\frac{1}{2} \sum_{y\in\mathbb{Z}^d, y \neq x} W_{s}(x, y)\left[(u^+(x)-u^+(y))+(u^-(x)-u^-(y))\right]\left(u^-(x)-u^-(y)\right)\\&=
\nabla^s u^{+} \nabla^s u^{-}+\left|\nabla^s u^{-}\right|^{2} \\&\geq\left|\nabla^s u^{-}\right|^{2}, 
\end{align*}
where we have used  (\ref{ar}) in the last inequality.
Now we take $u^{-}$as the test function of the equation (\ref{aq}), then one gets that
\begin{align*}
0 & \geq \int_{\mathbb{Z}^d}(R_\alpha \ast \widetilde{F}(u))\widetilde{f}(u)u^-\,d\mu \\
& =\int_{\mathbb{Z}^d}\left(|\nabla^s u|^{p-2} \nabla^s u \nabla^s u^{-}+h(x)|u|^{p-2} u u^{-}\right) d \mu \\
& \geq \int_{\mathbb{Z}^d}\left(\left|\nabla^s u^{-}\right|^{p}+h(x)\left|u^{-}\right|^{p}\right) d \mu \\
& =\left\|u^{-}\right\|_{H_{s,p}} . 
\end{align*}
This means that $u^{-} \equiv 0$, and hence $u \geq 0$.  If there exists some $x_{0}\in\mathbb{Z}^d$ such that $u\left(x_{0}\right)=0$, then we have $(-\Delta)^s_{p} u\left(x_{0}\right)=0$, namely
$$
\sum_{y\in\mathbb{Z}^d, y\neq x_{0}} W_{s}(x_0,y)\left(|\nabla^s u|^{p-2}(x_0)+|\nabla^s u|^{p-2}(y)\right) u(y)=0 .
$$
Consequently, $u(y)=0$ for all $y\in\mathbb{Z}^d$. However, this is a contradiction since $u$ is nontrivial. Thus we get that $u(x)>0$ for all $x \in \mathbb{Z}^d$, and thus $\widetilde{f}(u)=f(u)$. Therefore, $u$ is a strictly positive solution to the equation (\ref{aa}).
\end{proof}

As a consequence, without loss of generality, we assume $f(t)=0$ for $t \leq 0$ in our proofs. Clearly, the equation (\ref{aq}) is equivalent to the equation
\begin{equation}\label{qq}
(-\Delta)^s_{p} u+h(x)|u|^{p-2} u=\left(R_{\alpha} *F(u^+)\right)f(u^+),\quad u\in H_{s,p}. 
\end{equation}
Therefore, in the sequel, we only need to prove the equation (\ref{qq}) has a nontrivial weak solution, and hence a strictly positive solution of the equation (\ref{aa}). Let $\widetilde{J}_{s,p}: H_{s,p}\rightarrow \mathbb{R}$ be the functional with respect to the equation (\ref{qq})
$$\widetilde{J}_{s,p}(u)=\frac{1}{p}\|u\|^p_{H_{s,p}}-\int_{\mathbb{Z}^d} \left(R_{\alpha} *F(u^+)\right)F(u^+)\,d\mu.$$
Moreover, for any $\phi\in H_{s,p}$, we have
\begin{eqnarray*}
\left\langle \widetilde{J}_{s,p}^{\prime}(u), \phi\right\rangle=\int_{\mathbb{Z}^d}\left(|\nabla^s u|^{p-2}\nabla^s u \nabla^s \phi+h(x)|u|^{p-2} u \phi\right) \,d\mu-\int_{\mathbb{Z}^d}(R_\alpha \ast F(u^+))f(u^+)\phi\,d\mu.
\end{eqnarray*}

\
\

\section{Proof of Theorem \ref{t2}}
In this section, we prove Theorem \ref{t2} by the mountain pass theorem.  
Recall that, for a given functional $I\in C^{1}(X,\mathbb{R})$, a sequence $\{u_n\}\subset X$ is a Palais-Smale sequence at level $c\in\mathbb{R}$, $(PS)_c$ sequence for short, of the functional $I$, if it satisfies, as $n\rightarrow\infty$,
\begin{eqnarray*}
I(u_n)\rightarrow c, \qquad \text{in}~ X,\qquad\text{and}\qquad
I'(u_n)\rightarrow 0, \qquad \text{in}~X^*,
\end{eqnarray*}
where $X$ is a Banach space and $X^{*}$ is the dual space of $X$. Moreover, we say that $I$ satisfies $(PS)_c$ condition, if any $(PS)_c$ sequence has a convergent subsequence. 

First we show that
 the functional $\widetilde{J}_{s,p}$ satisfies the mountain-pass geometry.
\begin{lm}\label{lm}
 Let $(h_1)$ and $(f_1)$-$(f_3)$ hold. Then
\begin{itemize}
    \item[(i)] there exist $\sigma, \rho>0$ such that $\widetilde{J}_{s,p}(u) \geq \sigma>0$ for $\|u\|_{H_{s,p}}=\rho$;
    \item[(ii)] there exists $0< e \in H_{s,p}$ with $\|e\|_{H_{s,p}}>\rho$ such that $\widetilde{J}_{s,p}(e)< 0$.  
\end{itemize}  
\end{lm}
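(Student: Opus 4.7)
The plan is to handle (i) and (ii) separately: small-norm coercivity follows from the discrete HLS inequality (Lemma \ref{lm1}) combined with the growth bound (\ref{ae}), while the downward direction for (ii) is produced by the scaling inequality of Lemma \ref{lhf}.

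For (i), I will apply the HLS inequality with $r=t=\frac{2d}{d+\alpha}$ (admissible because $\alpha\in(0,d)$, and the balance condition becomes $\frac{d+\alpha}{d}+\frac{d-\alpha}{d}=2$) to obtain
\[
\int_{\mathbb{Z}^d}(R_\alpha\ast F(u^+))F(u^+)\,d\mu\leq C\|F(u^+)\|_{\frac{2d}{d+\alpha}}^{2}.
\]
Inserting (\ref{ae}), applying Minkowski's inequality (valid since $\frac{2d}{d+\alpha}\geq 1$), and using the embedding $\|u^+\|_q\leq\|u\|_p\leq C\|u\|_{H_{s,p}}$ — which follows from (\ref{lx}) together with the elementary monotonicity $\ell^p(\mathbb{Z}^d)\hookrightarrow\ell^q(\mathbb{Z}^d)$ for $q\geq p$ — at the two exponents $q=\frac{2dp}{d+\alpha}$ and $q=\frac{2d\tau}{d+\alpha}$, both strictly larger than $p$ (the first because $\alpha<d$, the second by the assumption $\tau>\frac{(d+\alpha)p}{2d}$), yields an estimate of the form
\[
\widetilde{J}_{s,p}(u)\geq \frac{1}{p}\|u\|_{H_{s,p}}^p-C_1\varepsilon^{2}\|u\|_{H_{s,p}}^{2p}-C_2\,C_\varepsilon^{2}\|u\|_{H_{s,p}}^{2\tau}.
\]
Since $\tau>\frac{(d+\alpha)p}{2d}>\frac{p}{2}$ forces $2\tau>p$, both correction terms are of strictly higher order than $\|u\|_{H_{s,p}}^{p}$; fixing $\varepsilon$ small and then $\rho$ small enough makes the bracketed factor positive, which gives (i) with, for instance, $\sigma=\frac{\rho^p}{2p}$.

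For (ii), I will exploit the superlinearity of $F$ encoded in $(f_3)$ through Lemma \ref{lhf}. Choose any nonnegative $u_0\in C_c(\mathbb{Z}^d)$ whose amplitude is large enough that $F(u_0)$ is not identically zero; such a $u_0$ exists unless $F\equiv 0$ on $[0,\infty)$, which would render the equation trivial. Since the on-diagonal value $R_\alpha(0,0)=\frac{K_\alpha}{(2\pi)^d}\int_{\mathbb{T}^d}\mu^{-\alpha/2}(k)\,dk$ is strictly positive, the quantity $\int_{\mathbb{Z}^d}(R_\alpha\ast F(u_0))F(u_0)\,d\mu$ is strictly positive as well. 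Applying Lemma \ref{lhf} for $t\geq 1$ gives
\[
\widetilde{J}_{s,p}(tu_0)\leq \frac{t^p}{p}\|u_0\|_{H_{s,p}}^p-t^\theta\int_{\mathbb{Z}^d}(R_\alpha\ast F(u_0))F(u_0)\,d\mu,
\]
and since $\theta>p$ this tends to $-\infty$ as $t\to\infty$. Picking $t$ large enough that $\widetilde{J}_{s,p}(tu_0)<0$ and $\|tu_0\|_{H_{s,p}}>\rho$ simultaneously, and setting $e:=tu_0$, completes (ii).

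The only point requiring genuine care is the bookkeeping of exponents in (i): one must verify that the HLS step, combined with the two-term growth bound on $F$, produces correction terms whose powers of $\|u\|_{H_{s,p}}$ are strictly greater than $p$. This rests precisely on the lower threshold $\tau>\frac{(d+\alpha)p}{2d}$ imposed in $(f_2)$, which is the critical exponent dictated by the HLS structure. Part (ii) is entirely routine once Lemma \ref{lhf} is at hand.
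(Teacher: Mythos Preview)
Your proposal is correct and follows essentially the same route as the paper: part (i) via the HLS inequality (Lemma~\ref{lm1}) with $r=t=\frac{2d}{d+\alpha}$ combined with the growth bound (\ref{ae}) and the embedding $\ell^p\hookrightarrow\ell^q$ for $q\ge p$, and part (ii) via the scaling inequality of Lemma~\ref{lhf} together with $\theta>p$. Your treatment is in fact slightly more careful than the paper's in two respects---you explicitly justify the admissibility of the HLS exponents and you address why the Choquard integral $\int(R_\alpha\ast F(u_0))F(u_0)\,d\mu$ is strictly positive (the paper simply takes this for granted); note only that your positivity argument via $R_\alpha(0,0)>0$ is cleanest if you take $u_0$ supported at a single vertex, and that a factor $\tfrac12$ is missing in your displayed upper bound for $\widetilde{J}_{s,p}(tu_0)$, neither of which affects the conclusion.
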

\begin{proof}
  (i) Let $u\in H_{s,p}\backslash\{0\}$.
Since $\alpha\in(0,d)$ and $\tau>\frac{(d+\alpha)p}{2d}$,
by (\ref{ae}), the HLS inequality (\ref{bo}) and $u^+\leq |u|$, we deduce that
\begin{eqnarray}\label{mc}
\int_{\mathbb{Z}^d}\left(R_\alpha \ast F(u^+)\right)F(u^+)\,d\mu\nonumber&\leq& C\left(\int_{\mathbb{Z}^d} |F(u^+)|^{\frac{2d}{d+\alpha}}\,d\mu  \right)^{\frac{d+\alpha}{d}}\\&\leq &C\left(\int_{\mathbb{Z}^d} \left(\varepsilon |u|^p+C_\varepsilon |u|^{\tau}\right)^{\frac{2d}{d+\alpha}}\,d\mu  \right)^{\frac{d+\alpha}{d}}\nonumber\\&\leq& \varepsilon \|u\|_{\frac{2dp}{d+\alpha}}^{2p}+C_\varepsilon \|u\|_{\frac{2d\tau}{d+\alpha}}^{2\tau}\nonumber\\&\leq& \varepsilon \|u\|_{p}^{2p}+C_\varepsilon \|u\|_{p}^{2\tau}\nonumber\\&\leq& \varepsilon \|u\|_{H_{s,p}}^{2p}+C_\varepsilon \|u\|_{H_{s,p}}^{2\tau},
\end{eqnarray}
where we have used the fact $\|u\|_{\frac{2dp}{d+\alpha}}\leq \|u\|_{p}$ since $\frac{2dp}{d+\alpha}>p$ in the forth inequality.
Then we have
\begin{equation}\label{jv}
\begin{aligned}
\widetilde{J}_{s,p}(u) & =\frac{1}{p}\|u\|_{H_{s,p}}^{p}-\frac{1}{2}\int_{\mathbb{Z}^d}(R_\alpha \ast F(u^+))F(u^+)\,d\mu \\&\geq\frac{1}{p}\|u\|_{H_{s,p}}^{p}-\varepsilon \|u\|_{H_{s,p}}^{2p}-C_\varepsilon \|u\|_{H_{s,p}}^{2\tau}.
\end{aligned}
\end{equation}
Note that $2\tau>\frac{(d+\alpha)p}{d}>p$. Let $\varepsilon\rightarrow 0^+$, then there exist $\sigma, \rho>0$ small enough such that $\widetilde{J}_{s,p}(u) \geq \sigma>0$ for $\|u\|_{H_{s,p}}=\rho$.

(ii)  Let $u\in H_{s,p}\backslash\{0\}$ with $u\geq 0$ be fixed. Since $\theta>p$ and $u^+\in H_{s,p}$, by (\ref{95}), we obtain that
\begin{eqnarray}\label{md}
 \lim _{t\rightarrow\infty} \widetilde{J}_{s,p}(tu)\nonumber&=&\lim _{t \rightarrow\infty}\left[\frac{t^{p}}{p}\|u\|_{H_{s,p}}^p-\frac{1}{2} \int_{\mathbb{Z}^d}(R_\alpha \ast F(tu^+))F(tu^+)\,d\mu\right]\nonumber\\&\leq&\lim _{t\rightarrow\infty}\left[\frac{t^{p}}{p}\|u\|_{H_{s,p}}^p-\frac{t^\theta}{2} \int_{\mathbb{Z}^d}\left(R_\alpha \ast F(u^+)\right)F(u^+)\,d\mu\right]\nonumber\\&\rightarrow&-\infty, \quad t \rightarrow \infty.  \end{eqnarray}
Hence, there exists $t_{0}>0$ large enough such that $\left\|t_{0} u\right\|>\rho$ and $J\left(t_{0} u\right)<0$. The proof is completed by taking $e=t_{0}u >0$. 
\end{proof}

Next, we show that the functional $\widetilde{J}_{s,p}$ satisfies the $(PS)_c$ condition.

\begin{lm}\label{ln}
Let $(h_1),(h_2)$ and $(f_1)$-$(f_3)$ hold. Then for any $c\in\mathbb{R}$, $\widetilde{J}_{s,p}$ satisfies the $(PS)_c$ condition. 
\end{lm}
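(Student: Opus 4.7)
Let $\{u_n\}\subset H_{s,p}$ be a $(PS)_c$ sequence. I would first prove boundedness via the Ambrosetti--Rabinowitz trick: the combination $\widetilde J_{s,p}(u_n)-\frac{1}{\theta}\langle \widetilde J_{s,p}'(u_n),u_n\rangle$ uses $(f_3)$ (so $\theta F\le 2ft$ makes the nonlocal piece nonnegative) to dominate $\bigl(\tfrac{1}{p}-\tfrac{1}{\theta}\bigr)\|u_n\|^p_{H_{s,p}}$, while on the other hand equalling $c+o(1)+o(\|u_n\|_{H_{s,p}})$; since $\theta>p$ this forces $\{u_n\}$ to be bounded in $H_{s,p}$.

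By Lemma \ref{lgg} and (\ref{lx}), a subsequence satisfies $u_n\rightharpoonup u$ weakly in $H_{s,p}$, $u_n\to u$ strongly in $\ell^q(\mathbb{Z}^d)$ for every $q\ge p$, and (via the uniform $\ell^\infty$ bound and a diagonal argument) $u_n(x)\to u(x)$ pointwise on $\mathbb{Z}^d$. Consequently $u_n^+\to u^+$ pointwise with $\{u_n^+\}$ bounded in $H_{s,p}$, so Lemma \ref{lo} applies to $\{u_n^+\}$ and delivers $u^+$ as its weak and pointwise limit.

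Next I would show that $u$ is a critical point of $\widetilde J_{s,p}$ by passing to the limit in $\langle \widetilde J_{s,p}'(u_n),\phi\rangle\to 0$ for every $\phi\in C_c(\mathbb{Z}^d)$. The nonlocal integral is handled by the HLS-plus-dominated-convergence machinery already deployed in the proof of Lemma \ref{lo} (tested against $\phi$ rather than $u_n$). The local $h|u_n|^{p-2}u_n\phi$ term converges by dominated convergence since $\phi$ has finite support and $u_n$ is uniformly bounded in $\ell^\infty$. For the fractional $p$-Laplacian term, the $\ell^\infty$ bound together with the summability of $W_s(x,\cdot)$ yields pointwise convergence $|\nabla^s u_n|(x)\to|\nabla^s u|(x)$ and $\nabla^s u_n\,\nabla^s\phi(x)\to\nabla^s u\,\nabla^s\phi(x)$; Vitali's theorem then closes the integral, the tail uniform integrability being supplied by the H\"older bound $\|\nabla^s u_n\|_p^{p-1}\|\nabla^s\phi\|_{p,\,\text{tail}}\to 0$. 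By density of $C_c$ in $H_{s,p}$, $\widetilde J_{s,p}'(u)=0$.

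Finally, testing $\widetilde J_{s,p}'(u_n)$ against $u_n$ and using $f(u_n^+)u_n=f(u_n^+)u_n^+$ (valid since $f(0)=0$) gives
\[
\|u_n\|^p_{H_{s,p}}=\int_{\mathbb{Z}^d}(R_\alpha\ast F(u_n^+))f(u_n^+)u_n^+\,d\mu+o(1)\longrightarrow \int_{\mathbb{Z}^d}(R_\alpha\ast F(u^+))f(u^+)u^+\,d\mu=\|u\|^p_{H_{s,p}}
\]
by Lemma \ref{lo} and the criticality of $u$. Since the $H_{s,p}$-norm is an $\ell^p$-direct sum of an $\ell^p(\ell^2)$-type gradient seminorm and a weighted $\ell^p$-seminorm, it is uniformly convex for $p\ge 2$; the Radon--Riesz property then promotes weak convergence plus $\|u_n\|_{H_{s,p}}\to\|u\|_{H_{s,p}}$ to strong convergence $u_n\to u$ in $H_{s,p}$. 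The main obstacle is the passage to the weak limit in the fractional $p$-Laplacian term against $\phi\in C_c$, which requires dominated convergence at the edge level (summability of the kernel $W_s$) combined with Vitali's theorem on the outer sum over $x$; the nonlocal Choquard term is technically involved but reduces to the HLS-and-Lebesgue estimates already carried out in Lemma \ref{lo}.
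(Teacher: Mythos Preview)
Your argument is correct, but it follows a different route from the paper's. The paper does not show that the weak limit $u$ is a critical point, nor does it invoke uniform convexity. Instead, after establishing boundedness and extracting a weakly convergent subsequence (as you do), it tests $\widetilde J_{s,p}'(u_n)$ against the \emph{difference} $u_n-u$. The nonlocal piece $\int(R_\alpha*F(u_n^+))f(u_n^+)(u_n-u)\,d\mu$ is killed directly by HLS plus H\"older and the strong $\ell^q$ convergence of $u_n-u$ (no need for Lemma~\ref{lo}); weak convergence dispatches the analogous expression with $u$ in place of $u_n$. The conclusion $\|u_n-u\|_{H_{s,p}}^p\to 0$ then follows from a Simon-type monotonicity inequality for the fractional $p$-Laplacian, quoted as Lemma~15 in \cite{ZY}, which bounds $\|u_n-u\|_{H_{s,p}}^p$ by a constant times the sum of these two vanishing integrals.

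Your approach trades the external monotonicity lemma for the uniform convexity of $H_{s,p}$ (your $\ell^p(\ell^2)$-direct-sum observation is correct) together with the Radon--Riesz property, at the cost of two extra steps: the weak-limit passage in the nonlinear gradient term against $\phi\in C_c$ (your Tannery/Vitali argument is fine, since $W_s(x,\cdot)$ is summable and $\nabla^s\phi\in\ell^p$), and the invocation of Lemma~\ref{lo} to get $\|u_n\|_{H_{s,p}}^p\to\|u\|_{H_{s,p}}^p$. The paper's route is shorter and more standard for quasilinear problems; yours is more self-contained in that it avoids importing the $p$-Laplacian inequality from \cite{ZY}, and it yields as a by-product that the limit $u$ is already a critical point.
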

\begin{proof}
For any $c\in\mathbb{R}$, let $\left\{u_{n}\right\}$ be a $(P S)_{c}$ sequence of $\widetilde{J}_{s,p}$,
\begin{equation*}
 \widetilde{J}_{s,p}\left(u_{n}\right)\rightarrow c, \quad \text { and } \quad \widetilde{J}_{s,p}^{\prime}\left(u_{n}\right)\rightarrow0.
 \end{equation*}
Then it follows from $(f_3)$ that
\begin{eqnarray}\label{ba}
\|u_n\|_{H_{s,p}}^p\nonumber&=&\frac{p}{2}\int_{\mathbb{Z}^d}(R_\alpha \ast F(u^+_n))F(u^+_n)\,d\mu+pc+o_n(1)\nonumber\\&\leq& \frac{p}{\theta}\int_{\mathbb{Z}^d}(R_\alpha \ast F(u^+_n))f(u^+_n)u^+_n\,d\mu+pc+o_n(1)\nonumber\\&\leq&\frac{p}{\theta} \left(\|u_n\|_{H_{s,p}}^p+o_n(1)\|u_n\|_{s,p}\right)+pc+o_n(1)\nonumber\\&=&\frac{p}{\theta}\|u_n\|_{H_{s,p}}^p+o_n(1)\|u_n\|_{H_{s,p}}+pc+o_n(1),
\end{eqnarray}
where $o_n(1)\rightarrow 0$ as $n\rightarrow\infty.$
This inequality implies that $\{u_n\}$ is bounded in $H_{s,p}$. Then by Lemma \ref{lgg}, passing to a subsequence if necessary, there exists $u \in H_{s,p}$ such that
\begin{equation}\label{ua}
\begin{cases}u_{n} \rightharpoonup u, & \text { in } H_{s,p}, \\ u_{n} \rightarrow u, & \text { in } \ell^{q}(\mathbb{Z}^d),q\geq p.\end{cases}
\end{equation}
By the HLS inequality (\ref{bo}), H\"{o}lder inequality, the boundedness of $\{u_n\}$ and (\ref{ua}), we deduce that
\begin{equation}\label{ak}
\begin{aligned}
&\left|\int_{\mathbb{Z}^d}(R_\alpha \ast F(u^+_n))f(u^+_n)(u_n-u)\,d\mu\right|
\\ \leq& C\left(\int_{\mathbb{Z}^d} |F(u^+_n)|^{\frac{2d}{d+\alpha}}\,d\mu  \right)^{\frac{d+\alpha}{2d}}\left(\int_{\mathbb{Z}^d} |f(u^+_n)(u_n-u)|^{\frac{2d}{d+\alpha}}\,d\mu  \right)^{\frac{d+\alpha}{2d}}\\ \leq&C\left(\int_{\mathbb{Z}^d} (|u_n|^p+|u_n|^\tau)^{\frac{2d}{d+\alpha}}\,d\mu  \right)^{\frac{d+\alpha}{2d}}\left(\int_{\mathbb{Z}^d} \left[(|u_n|^{p-1}+|u_n|^{\tau-1})|u_n-u|\right]^{\frac{2d}{d+\alpha}}\,d\mu  \right)^{\frac{d+\alpha}{2d}}\\ \leq& C\left[\left(\int_{\mathbb{Z}^d} (|u_n|^{p-1}|u_n-u|)^{\frac{2d}{d+\alpha}}\,d\mu  \right)^{\frac{d+\alpha}{2d}}+\left(\int_{\mathbb{Z}^d} (|u_n|^{\tau-1}|u_n-u|)^{\frac{2d}{d+\alpha}}\,d\mu  \right)^{\frac{d+\alpha}{2d}}\right] \\ \leq& C\|u_n\|^{p-1}_{\frac{2dp}{d+\alpha}}\|u_n-u\|_{\frac{2dp}{d+\alpha}}+C\|u_n\|^{\tau-1}_{\frac{2d\tau}{d+\alpha}}\|u_n-u\|_{\frac{2d\tau}{d+\alpha}} \\ \leq& C\|u_n-u\|_{\frac{2dp}{d+\alpha}}+C\|u_n-u\|_{\frac{2d\tau}{d+\alpha}} \\ \rightarrow&0,\quad n\rightarrow\infty,
\end{aligned}
\end{equation}
where we have used the facts $\frac{2dp}{d+\alpha}>p$ and $\frac{2d\tau}{d+\alpha}>p$ in the last inequality. 

Since $\{u_n\}$ is bounded in $H_{s,p}$ and $\widetilde{J}'_{s,p}(u_n)\rightarrow 0$, for  $\phi\in H_{s,p}$, we have that
$\langle \widetilde{J}'_{s,p}(u_n),\phi\rangle=o_n(1)$,  which is equivalent to

\begin{equation*}
\int_{\mathbb{Z}^d}\left(\left|\nabla^{s} u_{n}\right|^{p-2} \nabla^{s} u_{n} \nabla^{s} \phi+h(x)\left|u_{n}\right|^{p-2} u_{n} \phi\right) d \mu=\int_{\mathbb{Z}^d}(R_\alpha \ast F(u^+_n))f(u^+_n)\phi\,d\mu+o_{n}(1). 
\end{equation*}
Let $\phi=(u_{n}-u)\in H_{s,p}$ in the above equality. Then by (\ref{ak}), we derive that
\begin{equation*}
\int_{\mathbb{Z}^d}\left(\left|\nabla^{s} u_{n}\right|^{p-2} \nabla^{s} u_{n} \nabla^{s}\left(u_{n}-u\right)+h(x)\left|u_{n}\right|^{p-2} u_{n}\left(u_{n}-u\right)\right) d \mu=o_{n}(1).
\end{equation*}
Moreover, by the fact that $u_{n}$ weakly converges to $u$ in $H_{s, p}$, we derive that
\begin{equation*}
\int_{\mathbb{Z}^d}\left(\left|\nabla^{s} u\right|^{p-2} \nabla^{s} u \nabla^{s}\left(u_{n}-u\right)+h(x)|u|^{p-2} u\left(u_{n}-u\right)\right) d \mu=o_{n}(1).
\end{equation*}
By Lemma 15 in \cite{ZY}, one gets that
$$
\begin{aligned}
\left\|u_{n}-u\right\|_{H_{s, p}}^{p}= & \int_{\mathbb{Z}^d}\left(\left|\nabla^{s}\left(u_{n}-u\right)\right|^{p}+h(x)\left|u_{n}-u\right|^{p}\right) d \mu \\
\leq & 2^{p-2} p \int_{\mathbb{Z}^d}\left(\left|\nabla^{s} u_{n}\right|^{p-2} \nabla^{s} u_{n} \nabla^{s}\left(u_{n}-u\right)+h(x)\left|u_{n}\right|^{p-2} u_{n}\left(u_{n}-u\right)\right) d \mu \\
& +2^{p-2} p \int_{\mathbb{Z}^d}\left(\left|\nabla^{s} u\right|^{p-2} \nabla^{s} u \nabla^{s}\left(u_{n}-u\right)+h(x)|u|^{p-2} u\left(u_{n}-u\right)\right) d \mu
\\ \rightarrow &0,\quad n\rightarrow\infty.
\end{aligned}
$$
This means that $\left\|u_{n}-u\right\|_{H_{s, p}}=o_{n}(1)$.

\end{proof}

\
\

{\bf Proof of Theorem \ref{t2}. } By Lemma \ref{lm} and Lemma \ref{ln}, one sees that $\widetilde{J}_{s,p}$ satisfies the geometric conditions of the mountain-pass theorem. More precisely, Lemma \ref{lm} guarantees that there exists a positive function $u_{0} \in H_{s, p}$ with $\|u_{0}\|_{H_{s, p}}>\rho>0$ such that
$$
\inf _{\|u\|_{H_{s, p}}=\rho} \widetilde{J}_{s, p}(u)>\widetilde{J}_{s, p}(0)=0>\widetilde{J}_{s, p}(u_{0})
$$
for some constant $\rho>0$.  Lemma \ref{ln} ensures that $\widetilde{J}_{s,p}$ satisfies the $(P S)_{c}$ condition for any $c \in \mathbb{R}$. Define a set of paths
$$
\Gamma=\left\{\gamma \in C\left([0,1], H_{s, p}\right): \gamma(0)=0, \gamma(1)=u_{0}\right\} .
$$
By the mountain-pass theorem, we obtain that
$$
c=\inf_{\gamma \in \Gamma} \max _{t \in[0,1]} \widetilde{J}_{s, p}(\gamma(t))
$$
is a critical value of $\widetilde{J}_{s, p}$. In particular, there exists $u \in H_{s, p}$ such that $\widetilde{J}_{s, p}(u)=c$. Since $\widetilde{J}_{s, p}(u)=c\geq \rho>0$, we have $u \neq0$. Hence, $u$ is a nontrivial weak solution to the equation (\ref{qq}). Then it follows from Lemma \ref{i0} that $u$ is a strictly positive solution to the equation (\ref{aa}). The proof is completed.\qed

\
\

\section{Proof of Theorem \ref{t1}}
In this section, we prove Theorem \ref{t1} by the method of Nehari manifold. Let $$\mathcal{M}_{s, p}=\left\{u \in H_{s, p} \backslash\{0\}:\left\langle \widetilde{J}_{s, p}^{\prime}(u), u\right\rangle=0\right\}$$ be the Nehari manifold corresponding to $\widetilde{J}_{s,p}$, namely
\begin{equation*}
\mathcal{M}_{s, p}=\left\{u \in H_{s, p} \backslash\{0\}: \int_{\mathbb{Z}^d}\left(\left|\nabla^{s} u\right|^{p}+h(x)|u|^{p}\right) d \mu=\int_{\mathbb{Z}^d}(R_\alpha \ast F(u^+))f(u^+)u^+\,d\mu\right\} . 
\end{equation*}
Moreover, we denote
$$c_{s,p}:=\inf\limits_{u\in \mathcal{M}_{s,p}} \widetilde{J}_{s,p}(u).$$

First, we show some properties of $\widetilde{J}_{s,p}$ on the Nehari manifold $\mathcal{M}_{s,p}$ that are useful in our proofs.

\begin{lm}\label{lu}
For any $u \in \mathcal{M}_{s,p}$, there exists $\eta>0$ such that $\|u\|_{H_{s,p}} \geq \eta>0$.  \end{lm}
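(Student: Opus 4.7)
The plan is to exploit the Nehari identity together with the Hardy-Littlewood-Sobolev (HLS) inequality and the growth assumptions $(f_1)$-$(f_2)$ to bound the nonlinear term by powers of $\|u\|_{H_{s,p}}$ strictly larger than $p$, which then forces a positive lower bound on $\|u\|_{H_{s,p}}$.

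First, I would use the defining identity of $\mathcal{M}_{s,p}$ to write
\begin{equation*}
\|u\|_{H_{s,p}}^p \;=\; \int_{\mathbb{Z}^d}\bigl(R_\alpha \ast F(u^+)\bigr)\,f(u^+)\,u^+\,d\mu.
\end{equation*}
Next I would apply the discrete HLS inequality (Lemma \ref{lm1}) with $r = t = \frac{2d}{d+\alpha}$, yielding
\begin{equation*}
\int_{\mathbb{Z}^d}\bigl(R_\alpha \ast F(u^+)\bigr)\,f(u^+)u^+\,d\mu \;\leq\; C\,\|F(u^+)\|_{\frac{2d}{d+\alpha}}\,\|f(u^+)u^+\|_{\frac{2d}{d+\alpha}}.
\end{equation*}

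Then I would use the growth estimates (\ref{ad}) and (\ref{ae}), which give $F(t) \leq \varepsilon t^p + C_\varepsilon t^\tau$ and $f(t)t \leq \varepsilon t^p + C_\varepsilon t^\tau$ for $t>0$. Raising to power $\frac{2d}{d+\alpha}$ and summing, each of the two HLS factors is controlled by $C\bigl(\|u\|_{2dp/(d+\alpha)}^{p} + \|u\|_{2d\tau/(d+\alpha)}^{\tau}\bigr)$. Since $\tau > \frac{(d+\alpha)p}{2d}$ gives $\frac{2d\tau}{d+\alpha} > p$, and trivially $\frac{2dp}{d+\alpha} > p$, the embedding $\ell^p(\mathbb{Z}^d) \hookrightarrow \ell^q(\mathbb{Z}^d)$ for $q \geq p$ combined with (\ref{lx}) yields $\|u\|_{2dp/(d+\alpha)}, \|u\|_{2d\tau/(d+\alpha)} \leq C\,\|u\|_{H_{s,p}}$. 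Putting everything together,
\begin{equation*}
\|u\|_{H_{s,p}}^p \;\leq\; C_1 \|u\|_{H_{s,p}}^{2p} + C_2 \|u\|_{H_{s,p}}^{2\tau}.
\end{equation*}

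Finally, since $u \neq 0$ implies $\|u\|_{H_{s,p}} > 0$, I would divide through by $\|u\|_{H_{s,p}}^{p}$ to obtain
\begin{equation*}
1 \;\leq\; C_1 \|u\|_{H_{s,p}}^{p} + C_2 \|u\|_{H_{s,p}}^{2\tau - p}.
\end{equation*}
Both exponents $p$ and $2\tau - p$ are strictly positive (the latter because $\tau > \frac{(d+\alpha)p}{2d} > \frac{p}{2}$), so the right-hand side tends to $0$ as $\|u\|_{H_{s,p}} \to 0^+$. Consequently there must be a uniform constant $\eta > 0$, depending only on $C_1, C_2, p, \tau$, such that $\|u\|_{H_{s,p}} \geq \eta$. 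There is no real obstacle here — the only subtlety is keeping track of the two conjugate exponents in HLS and verifying that the resulting powers of $\|u\|_{H_{s,p}}$ both exceed $p$, which is exactly what the assumption $\tau > \frac{(d+\alpha)p}{2d}$ is designed to guarantee.
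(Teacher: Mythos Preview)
Your proposal is correct and follows essentially the same route as the paper: apply HLS with $r=t=\frac{2d}{d+\alpha}$ to the Nehari identity, bound $F(u^+)$ and $f(u^+)u^+$ via the growth estimates (\ref{ad})--(\ref{ae}), pass to $H_{s,p}$-norms through the $\ell^p\hookrightarrow\ell^q$ embedding, and conclude from the fact that the resulting exponents $2p$ and $2\tau$ both exceed $p$. The only cosmetic difference is that the paper carries the $\varepsilon$--$C_\varepsilon$ splitting through to the final inequality $\|u\|_{H_{s,p}}^p \leq \varepsilon\|u\|_{H_{s,p}}^{2p}+C_\varepsilon\|u\|_{H_{s,p}}^{2\tau}$, whereas you simply fix constants $C_1,C_2$; since $2p>p$ already, the $\varepsilon$ is unnecessary here and your version is arguably cleaner.
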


\begin{proof}
 It follows from the HLS inequalty (\ref{bo}) that
\begin{eqnarray}\label{aj}
\int_{\mathbb{Z}^d}(R_\alpha \ast F(u^+))f(u^+)u^+\nonumber\,d\mu
&\leq& C\left(\int_{\mathbb{Z}^d} |F(u^+)|^{\frac{2d}{d+\alpha}}\,d\mu  \right)^{\frac{d+\alpha}{2d}}\left(\int_{\mathbb{Z}^d} |f(u^+)u^+|^{\frac{2d}{d+\alpha}}\,d\mu  \right)^{\frac{d+\alpha}{2d}}\nonumber\\&\leq&C\left(\int_{\mathbb{Z}^d} \left(\varepsilon|u|^p+C_\varepsilon|u|^{\tau}\right)^{\frac{2d}{d+\alpha}}\,d\mu  \right)^{\frac{d+\alpha}{d}}\nonumber\\&\leq& \varepsilon\|u\|^{2p}_{\frac{2dp}{d+\alpha}}+C_\varepsilon\|u\|^{2\tau}_{\frac{2d\tau}{d+\alpha}}\nonumber\\&\leq& \varepsilon\|u\|_{H_{s,p}}^{2p}+C_\varepsilon\|u\|_{H_{s,p}}^{2\tau}.
\end{eqnarray}
Then for $u \in\mathcal{M}_{s,p}$, we have that
$$
\begin{aligned}
0&=\left\langle \widetilde{J}^{\prime}(u), u\right\rangle\\ & =\|u\|_{H_{s,p}}^{p}-\int_{\mathbb{Z}^d}(R_\alpha \ast F(u^+))f(u^+)u^+\,d\mu  \\
& \geq\|u\|_{H_{s,p}}^{p}-\varepsilon\|u\|_{H_{s,p}}^{2p}-C_{\varepsilon}\|u\|_{H_{s,p}}^{2\tau} .
\end{aligned}
$$
Note that $2\tau>p$, one gets easily that there exists $\eta>0$ such that $\|u\|_{H_{s,p}} \geq \eta>0$.   
\end{proof}

\begin{lm}\label{ld}
 For $u\in\mathcal{M}_{s,p}$, we have $\widetilde{J}_{s,p}(u)>0$.   
\end{lm}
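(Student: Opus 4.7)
The plan is to combine the Nehari identity with the Ambrosetti-Rabinowitz-type condition $(f_3)$ and the lower bound from Lemma \ref{lu}. Specifically, since $u \in \mathcal{M}_{s,p}$, the definition of the Nehari manifold gives the key identity
\[
\|u\|_{H_{s,p}}^p = \int_{\mathbb{Z}^d}(R_\alpha \ast F(u^+)) f(u^+) u^+ \, d\mu,
\]
which I would use to eliminate the norm term in $\widetilde{J}_{s,p}(u)$ in terms of the nonlocal integral (or vice versa).

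Next, I would invoke $(f_3)$, which states $\theta F(t) \le 2 f(t) t$ for $t > 0$. Since $F(u^+) \ge 0$ and the Green's function $R_\alpha$ is positive (so $R_\alpha \ast F(u^+) \ge 0$), multiplying pointwise by $R_\alpha \ast F(u^+)$ and summing yields
\[
\int_{\mathbb{Z}^d}(R_\alpha \ast F(u^+)) F(u^+) \, d\mu \le \frac{2}{\theta}\int_{\mathbb{Z}^d}(R_\alpha \ast F(u^+)) f(u^+) u^+ \, d\mu = \frac{2}{\theta}\|u\|_{H_{s,p}}^p.
\]

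Substituting this bound into the definition of $\widetilde{J}_{s,p}$ gives
\[
\widetilde{J}_{s,p}(u) = \frac{1}{p}\|u\|_{H_{s,p}}^p - \frac{1}{2}\int_{\mathbb{Z}^d}(R_\alpha \ast F(u^+)) F(u^+) \, d\mu \ge \left(\frac{1}{p} - \frac{1}{\theta}\right)\|u\|_{H_{s,p}}^p.
\]
Since $\theta > p$ by $(f_3)$, the prefactor is strictly positive, and by Lemma \ref{lu} we have $\|u\|_{H_{s,p}} \ge \eta > 0$, so the right-hand side is bounded below by $\left(\tfrac{1}{p} - \tfrac{1}{\theta}\right)\eta^p > 0$. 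This closes the proof.

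There is no serious obstacle here; the argument is essentially mechanical once one has the Nehari identity and $(f_3)$, together with the uniform lower bound from Lemma \ref{lu}. The only point to be a bit careful about is the sign of $R_\alpha \ast F(u^+)$, which is automatic because $R_\alpha$ is the Green's function of the discrete fractional Laplacian (hence positive) and $F \ge 0$ by $(f_3)$ combined with our convention $F(t)=0$ for $t \le 0$.
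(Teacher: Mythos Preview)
Your proof is correct and follows essentially the same approach as the paper: both use the Nehari identity, condition $(f_3)$, the positivity of $R_\alpha \ast F(u^+)$, and Lemma~\ref{lu} to arrive at the identical lower bound $\widetilde{J}_{s,p}(u)\ge\bigl(\tfrac{1}{p}-\tfrac{1}{\theta}\bigr)\eta^p>0$. The only cosmetic difference is that the paper packages the computation as $\widetilde{J}_{s,p}(u)-\tfrac{1}{\theta}\langle \widetilde{J}_{s,p}'(u),u\rangle$ before applying $(f_3)$, whereas you first bound the $F$-integral by the $f$-integral and then substitute the Nehari identity; the algebra is equivalent.
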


\begin{proof}
    For any $u \in \mathcal{M}_{s,p}$, by $(f_3)$ and Lemma \ref{lu}, one gets that
\begin{eqnarray*}
\widetilde{J}_{s,p}(u) &=& \widetilde{J}_{s,p}(u)-\frac{1}{\theta}\left\langle \widetilde{J}_{s,p}^{\prime}(u), u\right\rangle \\
&=&\left(\frac{1}{p}-\frac{1}{\theta}\right)\|u\|_{H_{s,p}}^{p}+\frac{1}{2}\int_{\mathbb{Z}^d}(R_\alpha \ast F(u^+))\left(\frac{2}{\theta}f(u^+)u^+-F(u^+)\right)\,d\mu\\
& \geq &\left(\frac{1}{p}-\frac{1}{\theta}\right)\|u\|_{H_{s,p}}^{p} \\& \geq &\left(\frac{1}{p}-\frac{1}{\theta}\right) \eta^{p}\\ &>& 0.
\end{eqnarray*}
\end{proof}

\begin{lm}\label{lj}
 Let $\left(h_{1}\right)$ and $\left(f_{1}\right)$-$\left(f_{4}\right)$ hold. Then for any $u \in H_{s,p} \backslash\{0\}$, there exists a unique $t_{u}>0$ such that $t_{u} u \in \mathcal{M}_{s,p}$ and $\widetilde{J}_{s,p}(t_{u} u)=$ $\max\limits_{t>0} \widetilde{J}_{s,p}(tu)$. Moreover, if $u \in \mathcal{M}_{s, p}$, then $\widetilde{J}_{s, p}(u)=\max\limits_{t>0} \widetilde{J}_{s, p}(t u)$.
\end{lm}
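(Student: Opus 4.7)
The plan is to reduce the statement to studying the single-variable function $g(t):=\widetilde{J}_{s,p}(tu)$ on $(0,\infty)$ for a fixed $u\in H_{s,p}\setminus\{0\}$ (with $u^+\not\equiv 0$, which is the only interesting case since otherwise $g(t)=\frac{t^p}{p}\|u\|_{H_{s,p}}^p$ has no interior critical point in $(0,\infty)$ and no element of $\mathcal M_{s,p}$ lies on the ray $\{tu:t>0\}$). Explicitly,
\[
g(t)=\frac{t^p}{p}\|u\|_{H_{s,p}}^p-\frac12\int_{\mathbb Z^d}(R_\alpha\ast F(tu^+))F(tu^+)\,d\mu,
\]
and a direct computation, together with the identity $f(tu^+)u=f(tu^+)u^+$ (since $f(s)=0$ for $s\le 0$ after the reduction), gives
\[
tg'(t)=\bigl\langle \widetilde J_{s,p}'(tu),tu\bigr\rangle=t^p\|u\|_{H_{s,p}}^p-\int_{\mathbb Z^d}(R_\alpha\ast F(tu^+))f(tu^+)(tu^+)\,d\mu.
\]
Hence $g'(t_u)=0$ with $t_u>0$ is equivalent to $t_uu\in\mathcal M_{s,p}$.

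For existence of a maximizer, I would first observe $g(0)=0$ and, arguing exactly as in the estimate (\ref{mc}) from Lemma \ref{lm}(i), obtain $g(t)\ge \frac{t^p}{p}\|u\|_{H_{s,p}}^p-\varepsilon t^{2p}\|u\|_{H_{s,p}}^{2p}-C_\varepsilon t^{2\tau}\|u\|_{H_{s,p}}^{2\tau}>0$ for all sufficiently small $t>0$. For large $t$, I apply Lemma \ref{lhf} to $u^+\in H_{s,p}\setminus\{0\}$ to get, for $t\ge 1$,
\[
\int_{\mathbb Z^d}(R_\alpha\ast F(tu^+))F(tu^+)\,d\mu\ge t^\theta\int_{\mathbb Z^d}(R_\alpha\ast F(u^+))F(u^+)\,d\mu,
\]
and since $\theta>p$ and the integral on the right is strictly positive, $g(t)\to -\infty$ as $t\to\infty$. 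Because $g$ is continuous, positive for small $t>0$, and tends to $-\infty$, it attains its global maximum at some interior point $t_u\in(0,\infty)$, where $g'(t_u)=0$, i.e.\ $t_uu\in\mathcal M_{s,p}$.

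For uniqueness I will exploit $(f_4)$. Dividing the critical-point equation $g'(t)=0$ by $t^{p-1}$ gives
\[
\|u\|_{H_{s,p}}^p=\frac{\displaystyle\int_{\mathbb Z^d}(R_\alpha\ast F(tu^+))f(tu^+)u^+\,d\mu}{t^{p-1}},
\]
and $(f_4)$ applied to $u^+\in H_{s,p}\setminus\{0\}$ says that the right-hand side is strictly increasing in $t\in(0,\infty)$. Hence the equation has at most one solution, giving the uniqueness of $t_u$ and, together with the existence above, implying $\widetilde J_{s,p}(t_uu)=\max_{t>0}\widetilde J_{s,p}(tu)$. The second claim then follows: if $u\in\mathcal M_{s,p}$, then $t=1$ satisfies this equation, so by uniqueness $t_u=1$ and $\widetilde J_{s,p}(u)=\max_{t>0}\widetilde J_{s,p}(tu)$. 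The only genuinely delicate point is the monotonicity step, because the natural fibering map involves $u^+$ while $(f_4)$ is stated for generic $u$; I handle it by applying $(f_4)$ to $u^+$ itself, using that $f(tu^+)u=f(tu^+)u^+$ so that the Nehari fibering equation is exactly the expression controlled by $(f_4)$.
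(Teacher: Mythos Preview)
Your argument is correct and follows essentially the same route as the paper: positivity of $g(t)$ for small $t$ via the estimate (\ref{mc}), $g(t)\to-\infty$ via Lemma~\ref{lhf}, existence of an interior maximum giving $t_uu\in\mathcal M_{s,p}$, and uniqueness from the strict monotonicity in $(f_4)$. You are slightly more careful than the paper in two respects---you make the identity $f(tu^+)u=f(tu^+)u^+$ explicit so that the fibering equation matches $(f_4)$ applied to $u^+$, and you flag the degenerate case $u^+\equiv 0$ (for which no $t_u$ exists, a point the paper silently ignores)---but the structure of the proof is the same.
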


\begin{proof} 
Let $u\in H_{s,p}\backslash\{0\}$ be fixed and $t>0$. By similar arguments to that of (\ref{mc}), we have
\begin{eqnarray*}
\int_{\mathbb{Z}^d}(R_\alpha \ast F(tu^+))F(tu^+)\,d\mu\leq\varepsilon t^{2p}\|u\|_{H_{s,p}}^{2p}+C_\varepsilon t^{2\tau}\|u\|_{H_{s,p}}^{2\tau}.
\end{eqnarray*}
Then we have
$$
\begin{aligned}
\widetilde{J}(tu) & =\frac{t^{p}}{p}\|u\|_{H_{s,p}}^{p}-\frac{1}{2}\int_{\mathbb{Z}^d}(R_\alpha \ast F(tu^+))F(tu^+)\,d\mu \\&\geq\frac{t^{p}}{p}\|u\|_{H_{s,p}}^{p}-\varepsilon t^{2p}\|u\|_{H_{s,p}}^{2p}-C_\varepsilon t^{2\tau}\|u\|_{H_{s,p}}^{2\tau}.
\end{aligned}
$$
Since $2\tau>p$, let $\varepsilon\rightarrow 0^+$, one gets easily that $J_{s,p}(tu)>0$ for $t>0$ sufficiently small.

Moreover, by (\ref{md}), we have that
$$
 \lim _{t\rightarrow\infty} \widetilde{J}(tu)\rightarrow-\infty, \quad t \rightarrow \infty.  $$
Therefore, $\max\limits_{t>0} \widetilde{J}(tu)$ is achieved at some $t_{u}>0$ with $t_{u} u \in \mathcal{M}_{s,p}$.

In the following, we show the uniqueness of $t_{u}$. If there exist $t_{u}^{\prime}>t_{u}>0$ such that $t_{u}^{\prime} u\in\mathcal{M}_{s,p}$ and $t_{u} u \in \mathcal{M}_{s,p}$, then we have
$$
\begin{aligned}
& \|u\|_{H_{s,p}}^p=\int_{\mathbb{Z}^d}\frac{(R_\alpha \ast F(t'_uu^+))f(t'_uu^+)u^+}{\left(t_{u}^{\prime}\right)^{p-1}}\,d\mu, \\
& \|u\|_{H_{s,p}}^p=\int_{\mathbb{Z}^d}\frac{(R_\alpha \ast F(t_uu^+))f(t_uu^+)u^+}{\left(t_{u}\right)^{p-1}}\,d\mu,
\end{aligned}
$$
and it follows from $(f_4)$ that
$$
\begin{aligned}
0=\int_{\mathbb{Z}^d}\frac{(R_\alpha \ast F(t'_uu^+))f(t'_uu^+)u^+}{\left(t_{u}^{\prime}\right)^{p-1}}\,d\mu-\int_{\mathbb{Z}^d}\frac{(R_\alpha \ast F(t_uu^+))f(t_uu^+)u^+}{\left(t_{u}\right)^{p-1}}\,d\mu>0.
\end{aligned}
$$
This is impossible, hence there exists a unique $t_u>0$ such that $t_u u\in\mathcal{M}_{s,p}$ and $\widetilde{J}_{s,p}(t_{u} u)=$ $\max\limits_{t>0} \widetilde{J}_{s,p}(tu)$. 

If $u \in \mathcal{M}_{s, p}$, it is obvious that $t_{u}=1$. Therefore, we get that $\widetilde{J}_{s, p}(u)=\max\limits_{t>0} \widetilde{J}_{s, p}(t u)$.
\end{proof}

Secondly, we prove that $c_{s, p}$ can be achieved on the Nehari manifold $\mathcal{M}_{s, p}$.

\begin{lm}\label{i3}
There exists a function $u_{0} \in \mathcal{M}_{s, p}$ such that $\widetilde{J}_{s, p}\left(u_{0}\right)=c_{s, p}>0$.    
\end{lm}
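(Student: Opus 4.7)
\textbf{Proof proposal for Lemma \ref{i3}.} My plan is to follow the classical Nehari manifold scheme: take a minimizing sequence, extract a weak limit, rule out vanishing, and then use the scaling Lemma \ref{lj} together with the weak convergence lemmas to show the infimum is attained.

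First I would pick a minimizing sequence $\{u_n\}\subset \mathcal{M}_{s,p}$ with $\widetilde{J}_{s,p}(u_n)\to c_{s,p}$. Because $u_n\in \mathcal{M}_{s,p}$ we have the identity $\widetilde{J}_{s,p}(u_n)=\widetilde{J}_{s,p}(u_n)-\tfrac{1}{\theta}\langle \widetilde{J}_{s,p}'(u_n),u_n\rangle$, and expanding gives
$$
\widetilde{J}_{s,p}(u_n)=\Big(\tfrac{1}{p}-\tfrac{1}{\theta}\Big)\|u_n\|_{H_{s,p}}^p+\tfrac{1}{2}\int_{\mathbb{Z}^d}(R_\alpha\ast F(u_n^+))\Big(\tfrac{2}{\theta}f(u_n^+)u_n^+-F(u_n^+)\Big)d\mu.
$$
Using $(f_3)$ the integral term is non-negative, hence $\|u_n\|_{H_{s,p}}^p\leq \tfrac{p\theta}{\theta-p}\widetilde{J}_{s,p}(u_n)$ stays bounded. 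By Lemma \ref{lgg} I can extract a subsequence with $u_n\rightharpoonup u_0$ in $H_{s,p}$, $u_n\to u_0$ strongly in every $\ell^q$ with $q\geq p$, and pointwise.

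The next key step is to show $u_0\neq 0$. Since $u_n\in \mathcal{M}_{s,p}$, Lemma \ref{lu} gives $\|u_n\|_{H_{s,p}}\geq \eta>0$, while the Nehari identity yields
$$
\|u_n\|_{H_{s,p}}^p=\int_{\mathbb{Z}^d}(R_\alpha\ast F(u_n^+))f(u_n^+)u_n^+\,d\mu.
$$
Applying Lemma \ref{lo} to the non-negative bounded sequence $\{u_n^+\}$ (which converges weakly to $u_0^+$ in $H_{s,p}$ thanks to Lemma \ref{i1} and the pointwise limit), the right-hand side tends to $\int_{\mathbb{Z}^d}(R_\alpha\ast F(u_0^+))f(u_0^+)u_0^+\,d\mu$. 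If $u_0\equiv 0$ this limit is $0$, contradicting $\|u_n\|_{H_{s,p}}^p\geq \eta^p$.

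With $u_0\neq 0$ in hand, Lemma \ref{lj} furnishes a unique $t_0>0$ with $t_0 u_0\in \mathcal{M}_{s,p}$, so $\widetilde{J}_{s,p}(t_0 u_0)\geq c_{s,p}$. For the reverse inequality, I split:
$$
\widetilde{J}_{s,p}(t_0 u_n)=\tfrac{t_0^p}{p}\|u_n\|_{H_{s,p}}^p-\tfrac{1}{2}\int_{\mathbb{Z}^d}(R_\alpha\ast F(t_0 u_n^+))F(t_0 u_n^+)\,d\mu.
$$
The norm term is weakly lower semi-continuous, and applying Lemma \ref{lo} to the bounded sequence $\{t_0 u_n^+\}$ gives strong convergence of the Choquard term. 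Therefore $\widetilde{J}_{s,p}(t_0 u_0)\leq \liminf_n \widetilde{J}_{s,p}(t_0 u_n)$. Since $u_n\in \mathcal{M}_{s,p}$, the second assertion of Lemma \ref{lj} yields $\widetilde{J}_{s,p}(t_0 u_n)\leq \max_{t>0}\widetilde{J}_{s,p}(tu_n)=\widetilde{J}_{s,p}(u_n)$. Chaining:
$$
c_{s,p}\leq \widetilde{J}_{s,p}(t_0 u_0)\leq \liminf_n \widetilde{J}_{s,p}(t_0 u_n)\leq \liminf_n \widetilde{J}_{s,p}(u_n)=c_{s,p}.
$$
Hence $\widetilde{J}_{s,p}(t_0 u_0)=c_{s,p}$, and the candidate minimizer is $t_0 u_0\in \mathcal{M}_{s,p}$; positivity of $c_{s,p}$ is already built into Lemma \ref{ld}. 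The main obstacle is the non-vanishing step, because it is the only place where the compactness from Lemma \ref{lgg} combined with Lemma \ref{lo} must be coordinated carefully with the lower bound of Lemma \ref{lu}; once this is secured, the rest is a straightforward weak semi-continuity plus scaling argument.
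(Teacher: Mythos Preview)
Your proof is correct and follows the same overall Nehari scheme as the paper (minimizing sequence, boundedness via $(f_3)$, compactness from Lemma~\ref{lgg}, nonlinear convergence from Lemma~\ref{lo}, scaling via Lemma~\ref{lj}), but the endgame is organized differently. The paper argues by contradiction that the weak limit $u_0$ itself lies on $\mathcal{M}_{s,p}$: from weak lower semicontinuity it gets $\|u_0\|_{H_{s,p}}^p\leq \int(R_\alpha*F(u_0^+))f(u_0^+)u_0^+\,d\mu$, assumes strict inequality, rescales to $t_0u_0\in\mathcal{M}_{s,p}$, and then uses the strict inequality to force $c_{s,p}<c_{s,p}$. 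You instead bypass the question of whether $t_0=1$ and show directly that $t_0u_0$ achieves the infimum via the chain $c_{s,p}\leq \widetilde{J}_{s,p}(t_0u_0)\leq\liminf_n\widetilde{J}_{s,p}(t_0u_n)\leq\liminf_n\widetilde{J}_{s,p}(u_n)=c_{s,p}$. Your route is slightly more economical; the paper's buys the extra conclusion that the weak limit is already on the manifold (i.e.\ $t_0=1$), which is not needed for the lemma as stated but can be useful downstream.
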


\begin{proof}
Since $c_{s,p}=\inf\limits_{u\in \mathcal{M}_{s,p}} \widetilde{J}_{s,p}(u),$ there exists a minimizing sequence $\{u_{n}\} \subseteq \mathcal{M}_{s, p}$ such that $$\widetilde{J}_{s,p}(u_{n})\rightarrow c_{s, p}, \quad n \rightarrow\infty. $$ Moreover, we have $\langle \widetilde{J}_{s,p}^{\prime}(u_n), u_n\rangle=0$. Then similar to (\ref{ba}), we derive that $\{u_{n}\}$ is bounded in $H_{s, p}$. By Lemma \ref{lgg}, there exists $u_0 \in H_{s,p}$ such that
\begin{equation*}\label{av}
 u_n\rightharpoonup u_0,\quad \text{in}~H_{s,p},\qquad u_n\rightarrow u_0,\quad \text{strongly}~\text{in}~\ell^q(\mathbb{Z}^d), ~q\geq p.   
\end{equation*}
Clearly, $\{u^+_n\}$ is also bounded in $H_{s,p}$. By Lemma \ref{lo}, we have that 
\begin{equation}\label{bz}
\lim\limits_{n\rightarrow\infty} \int_{\mathbb{Z}^d}(R_\alpha \ast F(u^+_n))F(u^+_n)\,d\mu=\int_{\mathbb{Z}^d}(R_\alpha \ast F(u^+_0))F(u^+_0)\,d\mu,
\end{equation}
and
\begin{equation}\label{bl}
 \lim\limits_{n\rightarrow\infty}\int_{\mathbb{Z}^d}(R_\alpha \ast F(u^+_n))f(u^+_n)u^+_n\,d\mu=\int_{\mathbb{Z}^d}(R_\alpha \ast F(u^+_0))f(u^+_0)u^+_0\,d\mu.   
\end{equation}
Then by (\ref{bz}), we deduce that
\begin{equation}\label{vm}
 \begin{aligned}
    & \widetilde{J}_{s,p}(u_0)\\ =& \frac{1}{p}\|u_0\|^p_{H_{s,p}}-\frac{1}{2}\int_{\mathbb{Z}^d}(R_\alpha \ast F(u^+_0))F(u^+_0)\,d\mu\\ \leq & \frac{1}{p}\limsup _{n \rightarrow\infty} \|u_{n}\|_{H_{s, p}}^{p}-\frac{1}{2}\int_{\mathbb{Z}^d}(R_\alpha \ast F(u^+_0))F(u^+_0)\,d\mu  \\ = & \limsup _{n \rightarrow\infty}\left[\frac{1}{p}\|u_{n}\|_{H_{s, p}}^{p}-\frac{1}{2}\int_{\mathbb{Z}^d}(R_\alpha \ast F(u^+_n))F(u^+_n)\,d\mu\right]\\=&\limsup _{n \rightarrow\infty} \widetilde{J}_{s,p}(u_n)\\=& c_{s,p}.
\end{aligned}   
\end{equation}
Note that $u_n\in\mathcal{M}_{s,p}$. On the one hand, by Lemma \ref{lj}, we have
\begin{equation}\label{az}
 \widetilde{J}_{s,p}\left(u_{n}\right)=\max _{t>0} \widetilde{J}_{s,p}\left(t u_{n}\right).   
\end{equation}
On the other hand,
by (\ref{bl}), we get that
\begin{equation}\label{at}
\left\|u_{0}\right\|_{H_{s, p}}^{p} \leq \limsup _{n \rightarrow\infty}\left\|u_{n}\right\|_{H_{s, p}}^{p}=\limsup _{n \rightarrow\infty} \int_{\mathbb{Z}^d}(R_\alpha \ast F(u^+_n))f(u^+_n)u^+_n\,d\mu=\int_{\mathbb{Z}^d}(R_\alpha \ast F(u^+_0))f(u^+_0)u^+_0\,d\mu. 
\end{equation}
Moreover, by Lemma \ref{lu}, we have that $\|u_n\|_{H_{s,p}}\geq \eta>0$, which implies that $u_0\neq 0$. 

Next we show that $u_0\in\mathcal{M}_{s,p}$.  If not, (\ref{at}) implies that $\left\|u_{0}\right\|_{H_{s, p}}^{p}<\int_{\mathbb{Z}^d}(R_\alpha \ast F(u^+_0))f(u^+_0)u^+_0\,d\mu.$ By the fact $u_0\neq 0$ and Lemma \ref{lj}, there exists a unique $t_{0}>0$ such that $t_{0} u_{0} \in \mathcal{M}_{s, p}$,
which says that $\widetilde{J}_{s,p}\left(t_{0} u_{0}\right) \geq c_{s, p}$. Then it follows from (\ref{bz}), (\ref{bl})  and (\ref{az}) that
$$
\begin{aligned}
c_{s, p} \leq &\widetilde{J}_{s,p}\left(t_{0} u_{0}\right)\\=&\frac{t^p_0}{p}\|u_0\|^p_{H_{s,p}}
-\frac{1}{2} \int_{\mathbb{Z}^d}(R_\alpha \ast F(t_0u^+_0))F(t_0u^+_0)\,d\mu\\ <&\frac{t^p_0}{p}\int_{\mathbb{Z}^d}(R_\alpha \ast F(u^+_0))f(u^+_0)u^+_0\,d\mu-\frac{1}{2} \int_{\mathbb{Z}^d}(R_\alpha \ast F(t_0u^+_0))F(t_0u^+_0)\,d\mu\\ =&\lim\limits_{n\rightarrow\infty}\left(\frac{t^p_0}{p}\int_{\mathbb{Z}^d}(R_\alpha \ast F(u^+_n))f(u^+_n)u^+_n\,d\mu-\frac{1}{2} \int_{\mathbb{Z}^d}(R_\alpha \ast F(t_0u^+_n))F(t_0u^+_n)\,d\mu\right)\\ =&\lim\limits_{n\rightarrow\infty}\left(\frac{t^p_0}{p}\|u_n\|^p_{H_{s,p}}-\frac{1}{2} \int_{\mathbb{Z}^d}(R_\alpha \ast F(t_0u^+_n))F(t_0u^+_n)\,d\mu\right)\\ =&\lim\limits_{n\rightarrow\infty}\left(\frac{1}{p}\|t_0u_n\|^p_{H_{s,p}}-\frac{1}{2} \int_{\mathbb{Z}^d}(R_\alpha \ast F(t_0u^+_n))F(t_0u^+_n)\,d\mu\right)\\=&\lim\limits_{n\rightarrow\infty} \widetilde{J}_{s,p}(t_0u_n)\\ \leq &\lim\limits_{n\rightarrow\infty} \widetilde{J}_{s,p}(u_n)\\=& c_{s,p},
\end{aligned}
$$
which is a contradiction. Hence $u_{0} \in \mathcal{M}_{s, p}$ and  
$\widetilde{J}_{s,p}\left(u_{0}\right) \geq c_{s, p}$. By Combining  (\ref{vm}) and Lemma \ref{ld}, we derive that $\widetilde{J}_{s,p}\left(u_{0}\right)=c_{s, p}>0$.

\end{proof}

By Lemma \ref{lj} and Lemma \ref{i3}, we obtain that there exists a function $u_{0} \in \mathcal{M}_{s, p}$ such that
\begin{equation}\label{v0}
\max _{t>0} \widetilde{J}_{s,p}\left(t u_{0}\right)=\widetilde{J}_{s,p}\left(u_{0}\right)=c_{s, p}>0 .
\end{equation}
In the following, we prove that $u_{0}$ is a critical point of $\widetilde{J}_{s,p}$.

\begin{lm}\label{lf}
 There holds $\widetilde{J}_{s,p}^{\prime}\left(u_{0}\right)=0$.   
\end{lm}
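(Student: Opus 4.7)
The plan is to argue by contradiction. Suppose $\widetilde{J}_{s,p}^{\prime}(u_{0}) \neq 0$. Since $\widetilde{J}_{s,p}^{\prime}(u_{0})$ is a nonzero element of the dual space of $H_{s,p}$, I can choose $\phi \in H_{s,p}$ with $\langle \widetilde{J}_{s,p}^{\prime}(u_{0}), \phi \rangle = -1$ (just pick any $\phi_0$ pairing nontrivially with $\widetilde{J}_{s,p}^{\prime}(u_{0})$ and renormalize/flip sign). For all sufficiently small $\epsilon > 0$, set $w_{\epsilon} := u_{0} + \epsilon \phi$; since $u_0 \neq 0$, $w_\epsilon \neq 0$. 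Lemma~\ref{lj} then produces a unique $t_{\epsilon} > 0$ with $t_{\epsilon} w_{\epsilon} \in \mathcal{M}_{s,p}$ and $\widetilde{J}_{s,p}(t_\epsilon w_\epsilon) = \max_{t>0} \widetilde{J}_{s,p}(t w_\epsilon)$. The goal is to show that $\widetilde{J}_{s,p}(t_\epsilon w_\epsilon) < c_{s,p}$ for some small $\epsilon > 0$, which contradicts $t_\epsilon w_\epsilon \in \mathcal{M}_{s,p}$ and the definition of $c_{s,p}$.

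The crucial intermediate step is to prove $t_{\epsilon} \to 1$ as $\epsilon \to 0^{+}$. For the upper bound, Lemma~\ref{lhf} (via $(f_3)$) makes the Choquard term in $\widetilde{J}_{s,p}(t w_\epsilon)$ dominate with $t^{\theta}$, $\theta > p$, so $\widetilde{J}_{s,p}(t w_\epsilon) \to -\infty$ as $t \to \infty$ uniformly for $w_\epsilon$ near $u_0$; combined with $\widetilde{J}_{s,p}(t_\epsilon w_\epsilon) \geq c_{s,p} > 0$ (Lemma~\ref{ld}), this forces $\{t_\epsilon\}$ to be bounded. For the lower bound, Lemma~\ref{lu} gives $t_\epsilon \|w_\epsilon\|_{H_{s,p}} \geq \eta$, so $t_\epsilon$ stays bounded away from $0$. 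Hence along any subsequence $t_{\epsilon_n} \to t_* \in (0, \infty)$, and the strong convergence $t_{\epsilon_n} w_{\epsilon_n} \to t_* u_0$ in $H_{s,p}$, together with continuity of $\widetilde{J}_{s,p}^{\prime}$ and Lemma~\ref{lo} for the convolution term, lets me pass to the limit in $\langle \widetilde{J}_{s,p}^{\prime}(t_{\epsilon_n} w_{\epsilon_n}), t_{\epsilon_n} w_{\epsilon_n}\rangle = 0$ to conclude $t_* u_0 \in \mathcal{M}_{s,p}$. The uniqueness assertion in Lemma~\ref{lj} applied to $u_0$ then gives $t_* = 1$, so the whole family satisfies $t_\epsilon \to 1$.

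To close the argument I compare $\widetilde{J}_{s,p}(t_\epsilon w_\epsilon)$ with $\widetilde{J}_{s,p}(t_\epsilon u_0)$ by the mean value theorem:
$$
\widetilde{J}_{s,p}(t_{\epsilon} w_{\epsilon}) - \widetilde{J}_{s,p}(t_{\epsilon} u_{0}) = t_{\epsilon}\epsilon\, \langle \widetilde{J}_{s,p}^{\prime}(t_{\epsilon} u_{0} + s_\epsilon t_{\epsilon}\epsilon \phi), \phi\rangle
$$
for some $s_\epsilon \in (0,1)$. Since $t_\epsilon \to 1$ and $\widetilde{J}_{s,p}^{\prime}$ is continuous, the right-hand side equals $t_{\epsilon}\epsilon\bigl(\langle \widetilde{J}_{s,p}^{\prime}(u_{0}), \phi\rangle + o(1)\bigr) = t_\epsilon \epsilon\bigl(-1 + o(1)\bigr)$, which is strictly negative for all small $\epsilon > 0$. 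On the other hand, Lemma~\ref{lj} applied to $u_0 \in \mathcal{M}_{s,p}$ gives $\widetilde{J}_{s,p}(t_{\epsilon} u_{0}) \leq \widetilde{J}_{s,p}(u_{0}) = c_{s,p}$. Putting these together yields $\widetilde{J}_{s,p}(t_{\epsilon} w_{\epsilon}) < c_{s,p}$, contradicting $t_{\epsilon} w_{\epsilon} \in \mathcal{M}_{s,p}$. Therefore $\widetilde{J}_{s,p}^{\prime}(u_{0}) = 0$. The main obstacle is the convergence $t_\epsilon \to 1$: it is where one must carefully combine the $\theta$-superlinearity from $(f_3)$, the uniform lower bound on the Nehari manifold, and strong continuity of the nonlocal Choquard terms via Lemma~\ref{lo}.
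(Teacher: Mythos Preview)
Your argument is correct and shares the paper's overall contradiction scheme: perturb $u_0$ in a direction $v$ of strict energy decrease, project the perturbation back onto $\mathcal{M}_{s,p}$, and show the resulting energy drops strictly below $c_{s,p}$. The difference lies in how the projection step is executed. You invoke Lemma~\ref{lj} to obtain $t_\epsilon$ and then establish $t_\epsilon\to 1$ through a compactness-and-uniqueness argument (boundedness via $(f_3)$ and Lemma~\ref{lu}, passage to the limit in the Nehari constraint via $C^1$-continuity/Lemma~\ref{lo}, then uniqueness from Lemma~\ref{lj}). The paper instead works directly with the constraint function $\varphi(t,r)=\|tu_0+rv\|_{H_{s,p}}^p-\int_{\mathbb{Z}^d}(R_\alpha*F((tu_0+rv)^+))f((tu_0+rv)^+)(tu_0+rv)^+\,d\mu$: using $(f_4)$ it observes that $\varphi(\cdot,0)$ is strictly decreasing with $\varphi(1,0)=0$, so $\varphi(1+\varepsilon_1,0)<0<\varphi(1-\varepsilon_1,0)$; continuity in $r$ then preserves the sign change for small $r>0$, and the intermediate value theorem in $t$ produces $t_r\in(1-\varepsilon_1,1+\varepsilon_1)$ with $\phi(t_r,r)\in\mathcal{M}_{s,p}$. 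The paper's route is shorter and avoids subsequence extraction and Lemma~\ref{lo}, exploiting $(f_4)$ in a bare-hands way; your route is the standard Nehari-projection approach and makes explicit that the uniqueness clause of Lemma~\ref{lj} is what pins down $t_*=1$.
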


\begin{proof}
By contradiction,  suppose that $\widetilde{J}_{s,p}^{\prime}\left(u_{0}\right) \neq 0$, then there exists $v \in H_{s, p} \backslash\{0\}$ such that $\langle \widetilde{J}_{s,p}^{\prime}(u_{0}), v\rangle \neq 0$. Without loss of generality, let $\langle \widetilde{J}_{s,p}^{\prime}(u_{0}), v\rangle<0$.  For $t, r\in\mathbb{R}$, we define
$$
\phi(t, r)=t u_{0}+r v.
$$
A direct calculation yields that
$$
\begin{aligned}
& \lim _{(t, r) \rightarrow(1,0)} \frac{\partial}{\partial r} \widetilde{J}_{s,p}(\phi(t, r))\\=&\lim _{(t, r) \rightarrow(1,0)} \frac{\partial}{\partial r} \left[\frac{1}{p}\|tu_0+rv\|^p_{H_{s,p}}-\frac{1}{2}\int_{\mathbb{Z}^d}(R_\alpha \ast F((tu_0+rv)^+))F((tu_0+rv)^+)\,d\mu\right]\\=&\int_{\mathbb{Z}^d}\left(|\nabla^s u_0|^{p-2}\nabla^s u_0 \nabla^s v+h(x)|u_0|^{p-2} u_0 v\right) \,d\mu-\int_{\mathbb{Z}^d}(R_\alpha \ast F(u^+_0))f(u^+_0)v\,d\mu\\=&
\left\langle \widetilde{J}_{s,p}^{\prime}\left(u_{0}\right), v\right\rangle\\<& 0.   
\end{aligned}
$$
Then there exist $0<\varepsilon_{1}<1$ and $\varepsilon_{2}>0$ small enough such that $\widetilde{J}_{s,p}(\phi(t, r))$ is strictly decreasing with respect to $r \in\left[-\varepsilon_{2}, \varepsilon_{2}\right]$ for any fixed $t \in\left[1-\varepsilon_{1}, 1+\varepsilon_{1}\right]$. For $r\in (0, \varepsilon_{2}]$, it follows from (\ref{v0}) that
\begin{equation}\label{v1}
\widetilde{J}_{s,p}(\phi(t, r))<\widetilde{J}_{s,p}(\phi(t, 0))=\widetilde{J}_{s,p}\left(t u_{0}\right) \leq c_{s, p}.
\end{equation}
For $\phi(t, r)=t u_{0}+r v,$ we define  \begin{equation}\label{v3}
\varphi(t, r)=\|\phi\|_{H_{s, p}}^{p}-\int_{\mathbb{Z}^d}(R_\alpha \ast F(\phi^+))f(\phi^+)\phi^+\,d\mu.    
\end{equation}
It is clear that $\varphi(t, 0)=t^{p} \psi(t)$ for $t>0$, where
$$
\psi(t)=\left\|u_{0}\right\|_{H_{s, p}}^{p}-\frac{\int_{\mathbb{Z}^d}(R_\alpha \ast F(tu^+_0))f(tu^+_0)u^+_0\,d\mu }{t^{p-1}}.
$$
By ($f_4$), one gets that the function $\psi(t)$ is strictly decreasing in $t>0$, and hence $\varphi(t,0)$. Moreover, note that $u_{0} \in \mathcal{M}_{s, p}$, we deduce that
$$
\varphi\left(1+\varepsilon_{1}, 0\right)<\varphi(1,0)=0<\varphi\left(1-\varepsilon_{1}, 0\right) .
$$
By the continuity of $\varphi(t, r)$ in $r \in \mathbb{R}$, there exists $\varepsilon_{3} \in\left(0, \varepsilon_{2}\right)$ such that
$$
\varphi\left(1+\varepsilon_{1}, r\right)<0<\varphi\left(1-\varepsilon_{1}, r\right), \quad r\in\left(0, \varepsilon_{3}\right] .
$$
Note that $\varphi(t, r)$ is also continuous in $t \in \mathbb{R}$, then for any $r \in\left(0, \varepsilon_{3}\right]$, there exists $t_{r} \in\left(1-\varepsilon_{1}, 1+\varepsilon_{1}\right)$ such that $\varphi\left(t_{r}, r\right)=0$. Then it follows from (\ref{v3}) that $\phi\left(t_{r}, r\right) \in \mathcal{M}_{s, p}$. Combined with $(\ref{v1})$, we obtain that
$$
c_{s, p}=\inf _{u \in \mathcal{M}_{s, p}} \widetilde{J}_{s,p}(u) \leq \widetilde{J}_{s,p}\left(\phi\left(t_{r}, r\right)\right)<c_{s, p},
$$
which is impossible. Hence we get that $\widetilde{J}_{s,p}^{\prime}\left(u_{0}\right) =0$.

\end{proof}

{\bf Theorem \ref{t1}.}
Lemma \ref{lj}, Lemma \ref{i3}, Lemma \ref{lf} and Lemma \ref{i0} ensure that  $u_{0}$ is a positive ground state solution to the equation (\ref{aa}). The proof is completed.

\qed

\
\

\section{Acknowledgements}
The author thanks the referees for  helpful comments and suggestions on this paper. Moreover, the author is supported by the National Natural Science Foundation of China, no.12401135.

\
\

{\bf Declarations}

\
\

{\bf Conflict of interest:} The author declares that there are no conflicts of interests regarding the publication of
this paper.

\end{document}